\numberwithin{equation}{section}
\newtheorem{theorem}{Theorem}[section]
\newtheorem{corollary}{Corollary}[section]
\newtheorem{lemma}{Lemma}[section]
\newtheorem{proposition}{Proposition}[section]
\newtheorem{remark}{Remark}
\newtheorem{definition}{Definition}
\newtheorem{bigthm}{Theorem}   % Numbered separately, as A, B, etc
\renewcommand{\eqref}[1]{(\ref{#1})}
\newcommand{\blue}{\textcolor{blue}}
\newcommand{\inftwo}[2]{\inf_{\substack{#1 \\ #2}}} % inf with 2 lines
\newcommand{\cA}{\ensuremath{\mathcal A}}
\newcommand{\cC}{\ensuremath{\mathcal C}}
\newcommand{\cG}{\ensuremath{\mathcal G}}
\newcommand{\cH}{\ensuremath{\mathcal H}}
\newcommand{\cI}{\ensuremath{\mathcal I}}
\newcommand{\cL}{\ensuremath{\mathcal L}}
\newcommand{\cM}{\ensuremath{\mathcal M}}
\newcommand{\cR}{\ensuremath{\mathcal R}}
\newcommand{\cS}{\ensuremath{\mathcal S}}
\newcommand{\cT}{\ensuremath{\mathcal T}}
\newcommand{\cU}{\ensuremath{\mathcal U}}
\newcommand{\frr}{\ensuremath{\mathfrak r}}
\newcommand{\bbE}{{\ensuremath{\mathbb E}} }
\newcommand{\bbP}{{\ensuremath{\mathbb P}} }
\newcommand{\bbQ}{{\ensuremath{\mathbb Q}} }
\newcommand{\bbR}{{\ensuremath{\mathbb R}} }
\newcommand{\bbX}{{\ensuremath{\mathbb X}} }
\newcommand{\bbZ}{{\ensuremath{\mathbb Z}} }
\newcommand{\sfs}{{\sf s}}
\newcommand{\sfr}{{\sf r}}
\newcommand{\sfC}{{\sf C}}
\newcommand{\sfS}{{\sf S}}
\newcommand{\ualpha}{\underline{\alpha}}
\newcommand{\ubeta}{\underline{\beta}}
\newcommand{\ugamma}{\underline{\gamma}}
\newcommand{\utheta}{\underline{\theta}}
\newcommand{\um}{\underline{m}}
\newcommand{\ump}{\underline{m}^\prime}
\newcommand{\uv}{\underline{v}}
\newcommand{\ux}{\underline{x}}
\newcommand{\uy}{\underline{y}}
\newcommand{\uf}{\underline{f}}
\def\1{\ifmmode {1\hskip -3pt \rm{I}}
\else {\hbox {$1\hskip -3pt \rm{I}$}}\fi} %indicator
\newcommand{\smallo}{o}
\newcommand{\soN}{\smallo_N (1)}
\newcommand{\df}{\stackrel{\Delta}{=}}
\newcommand{\sth}{~ :~}
\newcommand{\abs}[1]{\lvert#1\rvert}  %Absolute value
\newcommand{\lb}{\left(}
\newcommand{\rb}{\right)}
\newcommand{\lbr}{\left\{}
\newcommand{\rbr}{\right\}}
\newcommand{\case}[1]{C{\small ASE}~#1\!\! }
\newcommand{\ket}[1]{\vert#1\rangle}
\newcommand{\bra}[1]{\langle#1\vert}
\newcommand{\dd}{{\rm d}}
\newcommand{\ent}{{\rm ent}}
\newcommand{\be}{\begin{equation}}
\newcommand{\ee}{\end{equation}}
\begin{document}

\title[Stochastic Representation of Ground States]{ 
Ground States for 
Mean Field Models with a Transverse
Component}

\author{
Dmitry Ioffe}
\address{
Faculty of Industrial Engineering\\
Technion, Haifa 3200, Israel}
\email{ieioffe\@@ie.technion.ac.il}
\thanks{}
\date{\today}
\author{Anna Levit }
\address{Department of Mathematics\\
UBC, Vancouver, B.C. V6T 1Z2, Canada  }
\email{anna.levit\@@math.ubc.ca}
\thanks{}

%\thanks{This research was supported by a grant from G.I.F., 
%the German Israeli Foundation for Scientific Research and Development}
\vskip 0.2in
%\pagebreak
\setcounter{page}{1}
%\begin{small}
%\begin{center}
%ABSTRACT
%\end{center}
%\vskip 0.1in
\begin{abstract}
We investigate  global logarithmic asymptotics of ground states for a family  of 
quantum mean field models. Our approach is based on a stochastic representation and a
combination of large deviation and weak KAM techniques. The spin-$\frac{1}{2}$ case is 
worked out in more detail. 
\end{abstract}
\maketitle

%\maketitle
\section{The model and the result}
\subsection{Introduction}
\label{sub:Intro}
Stochastic representations/path integral  approach frequently provides a useful intuition and 
insight into the structure of quantum spin states. Numerous examples include 
\cite{AKN, AN94, CKP, CI, CCIL, Gi69, GUW, ILN, U}.  In this work we rely on  a path integral approach and related 
large deviations techniques, and derive {\em global} logarithmic asymptotics of ground states for a class
of quantum mean field models in transverse field. These asymtotics limits  are 
identified as weak-KAM \cite{F}  type solutions of certain Hamilton-Jacobi equations. In principle, such solutions
are not unique, and an additional refined analysis along the lines of 
\cite{DS, H88, H02} 
 is needed for recovering the correct asymptotic 
ground state. This issue is addressed in more detail for the spin $\frac12$-case.  
In particular, our results  imply logarithmic asymptotics of ground states for models with $p$-body 
interactions \cite{BS12}. 
\smallskip 

\noindent
In the case of Laplacian with periodic potential a weak KAM approach to semi-classical asymptotics  
was already employed in \cite{A}. 

\smallskip 
Our stochastic representation gives rise to a family of continuous time Markov chains on a 
simplex $\Delta_d^N$  (defined below) 
of $\frac{1}{N}\bbZ^d$.  The transition rates are enhanced by a factor of $N$, and the 
chain  moves in  a potential of the type $N F$. Ground states are Perron-Frobenius eigenfunctions
of the corresponding generators.  On the concluding stages of this work we have learned about
the series of papers \cite{KR1, KR2, KR3}.   The models we consider here essentially fall into a much
more general framework studied in these works.  The authors of \cite{KR1, KR2, KR3}
extend an analysis of Schr\"{o}dinger operators \cite{DS, H88, H02}  on $\bbR^d$ to 
lattice operators on $\epsilon\bbZ^d$, 
and they 
 develop powerful 
techniques, which go well beyond the scope of our work, and 
 which enable a complete asymptotic expansion of low lying eigenvalues and eigenfunctions
in neighbourhoods of potential wells.   
\smallskip 

\noindent
The paper is organized as follows:  The class of models is described in Subsection~\ref{sub:Models}, 
and the results are formulated in Subsection~\ref{sub:Results}.  Main steps of our approach are
explained  in Section~\ref{sec:Structure}, whereas some of the proofs are relegated to 
Section~\ref{sec:Proofs}.  The spin-$\frac12$ case is  studied in Section~\ref{sec:S12}.  Finally, 
in the Appendix, we establish the required properties of the Lagrangian $\cL_0$ 
in \eqref{eq:HLnot}   and, accordingly, 
the required regularity properties of local minimizers.  
\subsection{Class of Models} 
\label{sub:Models}
Let $\bbX$ be a $d$-dimensional complex Hilbert space.
For the rest of the paper we fix an orthonormal basis 
$\lbr \ket{\alpha}\rbr_{\alpha\in {\cA}} $ of 
$\bbX$.   We refer to the set $\cA$ of cardinality $d$ as the set of classical labels.
Denote projections $P_\alpha\df \ket{\alpha}\bra{\alpha}$. 
The induced basis of $\bbX_N = \otimes_1^N\bbX $ is 
\[
 \ket{\ualpha} = \ket{\alpha_1}\otimes\dots\otimes \ket{\alpha_N}\quad \alpha_1, \dots, 
\alpha_N\in\cA .
\]
The corresponding lifting of the projection operator acting on $i$-th component is 
$P^i_\alpha = I\otimes \dots I\otimes  P_\alpha\otimes I\otimes\dots\otimes I$. 
For $\alpha\in\cA$ set $M^N_\alpha =\frac{1}{N}\sum_i  P^i_\alpha$. 
Let $\underline{M}^N$ be the $d$-dimensional vector with operator entries $M^N_\alpha$.

We are ready to define the Hamiltonian 
$\cH_N$ which acts on $\bbX_N$, 
\be
\label{eq:HN}
-\cH_N = N  F \lb 
%\frac{1}{2N^2}\sum_{i\neq j} A_{ij}
\underline{M}^N
\rb + \sum_i B_i . 
\ee
Above,  $B_i$-s are copies of a Hermitian matrix $B$ on $\bbX$, $B_i$ acts on the $i$-th
component of $\ket{\ualpha}$. 
%$A_{ij} = A_{ji}$ are copies of a Hermitian matrix
%$A$ on $\bbX\otimes\bbX$, $A_{i,j}$ acts on $i$-th and $j$-th components of 
%$\ket{\ualpha}$. 

We assume:

%\paragraph{\bf A1.} $A$ is diagonal: $A\ket{\alpha , \beta}  = \rho_{\alpha \beta }
%\ket{\alpha , \beta}$.

\paragraph{\bf A1.} $F$  is a real polynomial of finite degree. 
\smallskip 

\noindent
Let $\Delta_d$ be the simplex, $\Delta_d = \lbr \um\in\bbR^d_+ \sth
\sum m_i = 1\rbr$. In the sequel we shall write ${\rm int}\lb \Delta_d\rb$ 
for the relative interiour 
of $\Delta_d$. 
Accordingly,
 $\partial\Delta_d \df \Delta_d\setminus{\rm int}\lb \Delta_d\rb $,

Given $\um\in \Delta_d$ and a basis vector 
$\ualpha\in\cA^N$ let us say
that $\ualpha\sim\um$, or, equivalently, $\um = \um  (\ualpha )$,  if 
\be 
\label{eq:malpha}
m_\alpha = \frac{\# \lbr i\sth \alpha_i = \alpha\rbr}{N} \Leftrightarrow {M}^N_\alpha 
\ket{\ualpha} = m_\alpha\ket{\ualpha}  ,
\ee
for all $\alpha\in\cA$.  
Define 
$\Delta_d^N = \Delta_d\cap \frac1{N}\bbZ^d$. In other words, 
$\um\in  \Delta_d^N $ iff there exists a compatible $\ualpha\in\cA^N$.
In the above notation:
\be  
\label{eq:Psium}
%\begin{split}
 F \lb \underline{M}^N 
%\frac{1}{2N^2}\sum_{i\neq j} A_{ij}\rb \ket{\ualpha} 
%= F\lb \sum_{\alpha , \beta} \rho_{\alpha \beta } m_\alpha m_\beta
%-\frac{1}{N}\sum_\alpha \rho_{\alpha \beta }m_\alpha 
 \rb
\ket{\ualpha} 
 % \df F 
= F (\um (\ualpha ) )\ket{\ualpha} .
%\end{split} 
\ee
\paragraph{\bf A2.} The transverse field $B$ is stochastic: For any $\alpha ,\beta\in \cA$ , 
\be  
\label{eq:rates}
\lambda_{\alpha \beta } = \lambda _{\beta \alpha}\df \bra{\alpha}B\ket{\beta} \geq 0.
\ee
Furthermore, $\lambda$ is an irreducible kernel on $\cA$. 
Without loss of generality we shall assume that $\lambda\equiv 0$ on the diagonal.

\subsection{An Example: Spin-$\sfs$  Models} 
The relation between the dimension $d$ of $\bbX$ and the half-integer spin 
$\sfs$ is $d=2\sfs +1$. The set of classical labels is
\[
 \cA =\lbr -\sfs, -\sfs +1, \dots, \sfs\rbr .
\]
The stochastic operators 
%are $A_{ij} = \sfS_i^z\sfS_j^z$ and 
are $B_i =\lambda  \sfS_i^x$. 
$\lambda\geq 0$ is  the strength of the transverse field. 
%Finally, $F (t )=t$. 
Altogether, the Hamiltonian is
\be  
\label{eq:spins}
 -\cH_N = N F\lb
% \frac1{2N^2}\sum_{i\neq j}\sfS_i^z\sfS_j^z
M^N_{-\sfs }, M^N_{-\sfs + 1}, \dots , M^N_{\sfs}
\rb  +\lambda \sum_{i}\sfS_i^x .
\ee
For instance, the case of $p$-body ferromagnetic interaction corresponds to
\begin{equation}
\label{eq:p-body}
 F\lb M^N_{-\sfs }, M^N_{-\sfs - 1}, \dots , M^N_{\sfs}\rb = 
\lb \sum_\alpha \alpha M^N_\alpha\rb^p = \lb \frac{1}{N}\sum_i \sfS^z_i\rb^p .
\end{equation}
The operators 
%$\sfS^z$ and 
$\sfS^x$ act (under convention that
$\ket{\sfs +1} = \ket{-\sfs - 1}= 0$)  on $\bbX$ as
\be 
\label{eq:SAction}
%\sfS^z\ket{\alpha } = \alpha\ket{\alpha}\ \text{and}\ 
\sfS^x\ket{\alpha} = \frac{1}{2}\sqrt{\sfs (\sfs +1 ) - \alpha (\alpha-1 )}\ket{\alpha -1} +
\frac{1}{2}\sqrt{\sfs (\sfs +1 ) - \alpha (\alpha+1 )}\ket{\alpha +1}
\ee
Consequently, the jump rates $\lambda_{\alpha\beta}$ are given by 
%$\rho_{\alpha \beta } = \alpha\beta$ and 
\be
\label{eq:lambdaS}
\lambda_{\alpha \beta } = 
\begin{cases}
\frac{\lambda}{2}\sqrt{\sfs (\sfs +1 ) - \alpha\beta}, \quad &\text{if $|\alpha -\beta |=1$}\\
0, &\text{otherwise}. 
\end{cases}
\ee
\subsection{The Result}
\label{sub:Results}
%Let
%\[
% F (\um ) = \lim_{N\to\infty}F (\um ) 
%=  F\lb \sum_{\alpha , \beta} \rho_{\alpha \beta }m_\alpha m_\beta\rb .
%\]
In order to develop an  asymptotic description of finite volume ground states 
we need to introduce some additional notation: For
$\um\in \Delta_d^N$ set
\[
% {N\choose N\um} 
c_N (\um ) = {N\choose N\um} = \frac{N!}{\prod (Nm_\alpha )!} . 
\]
The vectors $\ket{\um}\in\bbX_N$, 
\[
 \ket{\um} \df  \frac{1}{\sqrt{c_N (\um )}}\sum_{\ualpha\sim \um}\ket{\ualpha} 
\]
are normalized and orthogonal for different $\um\in \Delta_d^N $.

By Perron-Frobenius theorem and Lemma \ref{lem:HNLumping} below 
the ground state of  $\cH_N$ is fully symmetrized, that is of the form
\be 
\label{eq:GSHN}
\ket{h_N} = \sum_{\um\in\Delta_d^N} h_N (\um )\ket{\um} , 
\ee
and  $h_N (\um ) >0$ for every $\um\in\Delta_d^N$ (see Subsection~\ref{sub:Spec}).  Let 
us represent 
\be  
\label{eq:hN}
h_N (\um ) = {\rm e}^{-N\psi_N (\um )} .
\ee
It would be convenient to identify $\psi_N$ with its linear interpolation (which is an element of 
the space of continuous functions $\sfC (\Delta_d )$). 

%In order to describe the asymptotic behaviour of $\lbr \psi_N\rbr$ let us 
Next introduce:
\be
\label{eq:HLnot}
\begin{split}
&\cH_0 \lb \um ,\utheta\rb = \sum_{\alpha\beta}\sqrt{m_\alpha m_\beta}\, \lambda_{\alpha\beta}
\lb {\rm cosh} (\theta_\beta - \theta_\alpha ) -1\rb 
 \\
&
\cL_0 (\um , \uv ) = \sup_{\utheta} \lbr (\uv , \utheta)  - \cH_0 (\um , \utheta )\rbr 
\end{split}
\ee
For $\um\in\Delta_d$ 
define 
\be
\label{eq:VE}
V(\um ) = 
\frac12
\sum_{\alpha ,\beta }\lambda_{\alpha\beta} 
\lb 
\sqrt{m_\beta} 
- \sqrt{m_\alpha}\rb^2 
- F (\um ) .
%\df \cE (\um ) - F (\um ).
\ee
Finally set $\lambda_\alpha  = \sum_\beta \lambda_{\alpha \beta }$, 
\be
\label{eq:HL}
\begin{split}
\cH  \lb \um ,\utheta\rb  &= \cH_0  \lb \um ,\utheta\rb - V(\um ) \\
&= 
\sum_{\alpha\beta}\sqrt{m_\alpha m_\beta}\lambda_{\alpha\beta}
{\rm cosh} (\theta_\beta - \theta_\alpha ) - \sum_\alpha m_\alpha\lambda_\alpha + F(\um ) ,
\\
&\qquad {\rm  and}\\
\cL  (\um , \uv ) &= \cL_0 (\um , \uv ) + V (\um ) .
\end{split}
\ee
In \eqref{eq:HL} above $(\cdot ,\cdot )$ is the usual scalar product on $\bbR^d$. 
\smallskip 
%\begin{bigthm}
%\label{thm:A}
%
%\end{bigthm}
\begin{bigthm}
 \label{thm:A}
Let $E_N^1$ be the bottom eigenvalue of $\cH_N$. 
Set $\lambda = \sum_\alpha \lambda_\alpha $. 
Then the limit
\be 
\label{eq:e1}
%- \lambda + \
-\lambda + \sfr_1 \df \lim_{N\to\infty}\frac{E_N^1}{N} 
 %= \min_{\um } V (\um ). 
\ee
exists. Moreover,
\be 
\label{eq:C1}
\sfr_1 = \min_{\um } V (\um )   .
\ee
Furthermore, the sequence $\lbr\psi_N\rbr$ is precompact in $\sfC(\Delta_d )$. Any subsequential limit $\psi$ 
satisfies: 
 For any $T\geq 0$ and any $\um\in \Delta_d$,
\be 
\label{eq:C2}
\psi (\um ) = \inf_{\gamma\sth \gamma (T) = \um}\lbr 
\psi (\gamma (0)) + \int_0^T \cL \lb \gamma (t) ,\gamma^\prime (t )\rb\dd t - T\sfr_1
\rbr ,
\ee 
where the infimum above is over all absolutely continuous curves $\gamma : [0,T]\to 
\Delta_d$.   Moreover, the set $\cM_\psi$ of all local minima of $\psi$ is a subset of 
$\cM_V\df {\rm argmin} (V)\subset {\rm int}(\Delta_d )$. 
\end{bigthm}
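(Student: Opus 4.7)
My plan is to translate the spectral problem for $\cH_N$ into a large-deviations problem for a continuous-time Markov chain on the simplex $\Delta_d^N$, and then to identify the limit description as the stationary Hamilton-Jacobi equation associated with the Lagrangian $\cL$ of \eqref{eq:HL}. First I would compute $-\cH_N$ explicitly in the symmetric basis $\{\ket{\um}\}_{\um\in\Delta_d^N}$: a direct combinatorial count of $\sum_i B_i \ket{\um}$ gives
\[
(-\cH_N)\ket{\um} \,=\, NF(\um)\ket{\um} \,+\, \sum_{\alpha\ne\beta} N\sqrt{m_\alpha\,m'_\beta}\,\lambda_{\alpha\beta}\,\ket{\um'},
\]
where $\um' = \um + (e_\beta - e_\alpha)/N$ and $m'_\beta = m_\beta + 1/N$. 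Perron-Frobenius then yields a strictly positive ground state $h_N(\um) = \mathrm e^{-N\psi_N(\um)}$, and dividing the eigenvalue equation by $h_N(\um)$ gives the discrete identity
\[
F(\um) \,+\, \sum_{\alpha\ne\beta}\sqrt{m_\alpha m'_\beta}\,\lambda_{\alpha\beta}\,\mathrm e^{-N[\psi_N(\um')-\psi_N(\um)]} \,=\, -E_N^1/N,
\]
valid at every $\um\in\Delta_d^N$. Symmetrising $\alpha\leftrightarrow\beta$ turns the exponential sum into $\cosh(\theta_\beta-\theta_\alpha)$, so the formal $N\to\infty$ limit is the stationary Hamilton-Jacobi equation for $\cH$ of \eqref{eq:HL} at level $-\sfr_1$.

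For the identification $\sfr_1 = \min V$ I would obtain matched bounds. The upper bound comes from the coherent product trial state $\ket{v}^{\otimes N}$ with $v^\alpha = \sqrt{m^\ast_\alpha}$, $\um^\ast\in\mathrm{int}(\Delta_d)$: multinomial concentration on $\um^\ast$ together with the polynomial boundedness of $F$ reduces the Rayleigh quotient to an explicit function of $\um^\ast$, which after the algebraic rearrangement
\[
\sum m_\alpha\lambda_\alpha \,-\, V(\um) \,=\, F(\um) + \sum \sqrt{m_\alpha m_\beta}\,\lambda_{\alpha\beta}
\]
and minimisation over $\um^\ast$ gives the stated limit. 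For the matching lower bound I would apply the Collatz-Wielandt inequality with test function $\mathrm e^{-N\psi}$ for a suitable smooth $\psi$, or equivalently evaluate the discrete identity at an argmin of $\psi_N$; the boundary coercivity of $V$, which follows from the blow-up of $\partial_{m_\alpha}V$ as $m_\alpha\to 0$ caused by the $\sqrt{m_\alpha}$ term in \eqref{eq:VE}, then both pins the relevant minimiser to $\mathrm{int}(\Delta_d)$ and establishes the inclusion $\cM_V\subset\mathrm{int}(\Delta_d)$.

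For the precompactness of $\{\psi_N\}$ and the Lax-Oleinik identity \eqref{eq:C2}, the plan is to use the stochastic representation: isolating the diagonal contribution from the normalisation $\sqrt{c_N(\um')/c_N(\um)}$ produces a Feynman-Kac formula
\[
h_N(\um) \,=\, \mathrm e^{E_N^1 T}\,\bbE_\um\!\Big[\mathrm e^{\int_0^T NU(X_s)\,\dd s}\,h_N(X_T)\Big],
\]
where $X_s$ is a continuous-time Markov chain on $\Delta_d^N$ with rates of order $N$ and $U$ is an effective potential combining $F$ with the diagonal term from the normalisation. A Donsker-Varadhan type large deviation principle for the paths of $X_s$ at speed $N$, with rate function $\int_0^T \cL_0(\gamma,\dot\gamma)\,\dd t$, combined with Varadhan's lemma, delivers uniform Lipschitz equicontinuity of $\psi_N$ on compacts of $\mathrm{int}(\Delta_d)$ and, in the $N\to\infty$ limit, the Lax-Oleinik formula \eqref{eq:C2}, which in the viscosity sense is the stationary weak-KAM equation for $\cL$. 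The inclusion $\cM_\psi\subset\cM_V$ then follows immediately: at an interior local minimum $\um_0$ of $\psi$, testing the viscosity supersolution property against the constant $\phi\equiv\psi(\um_0)$ yields $\cH(\um_0,0)\ge -\sfr_1$, and since $\cH(\um,0) = -V(\um)$ this forces $V(\um_0)\le\sfr_1=\min V$. The main technical obstacle I anticipate is the uniform Lipschitz control of $\psi_N$ up to $\partial\Delta_d$: the rates $\sqrt{m_\alpha m'_\beta}$ degenerate there, so both the Donsker-Varadhan principle and the construction of regular minimisers of $\cL_0$ depend essentially on the boundary analysis of $\cL_0$ deferred to the Appendix.
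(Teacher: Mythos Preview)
Your overall architecture --- pass to the symmetric sector, write the ground state as the Perron--Frobenius eigenfunction of a discrete Schr\"odinger-type operator on $\Delta_d^N$, represent it via Feynman--Kac for a rate-$N$ Markov chain, and then combine a path-level large deviation principle with Varadhan's lemma to obtain equicontinuity of $\{\psi_N\}$ and the Lax--Oleinik identity \eqref{eq:C2} --- is exactly the paper's route (Subsections~\ref{sub:SR}--\ref{sub:LOSWKAM} and Theorems~\ref{thm:LD}, \ref{thm:LDUniform}). Your explicit matrix computation of $-\cH_N$ in the $\{\ket{\um}\}$ basis is precisely the Girsanov transform $g=1/\sqrt{\mu_N}$ of Subsection~\ref{sub:MFL}.

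Where you diverge is in two auxiliary steps. First, for the identification $\sfr_1=\min V$ you propose matched variational bounds: a Rayleigh upper bound via the coherent product state $\ket{v}^{\otimes N}$ and a lower bound by evaluating the eigenfunction identity at $\operatorname*{argmin}\psi_N$. The paper instead deduces $\sfr_1=\min V$ \emph{after} having established \eqref{eq:C2}: testing with the constant path $\gamma\equiv\um$ gives $V(\um)\ge\sfr_1$, and testing at a minimiser of $\psi$ forces equality (Subsection~\ref{sub:PMVArPhi}). Your route is more elementary and self-contained; the paper's has the advantage that it does not require locating $\operatorname*{argmin}\psi_N$ in the interior a priori.

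Second, for $\cM_\psi\subset\cM_V$ you invoke the viscosity supersolution property with the constant test function at an \emph{interior} local minimum. This is fine on ${\rm int}(\Delta_d)$, but the supersolution property is only established there (Theorem~\ref{thm:constraint}), so your argument does not by itself exclude local minima of $\psi$ on $\partial\Delta_d$. The paper handles this directly from the Lax--Oleinik formula \eqref{eq:C2}, which is valid on all of $\Delta_d$: combining superlinearity of $\cL$ (to localise to short paths) with $\cL(\um,\uv)\ge V(\um)$ yields \eqref{eq:MinPsi}, which forces every local minimum of $\psi$ into $\cM_V$ and hence into the interior. You should either reproduce this argument or separately rule out boundary minima of $\psi$; the boundary coercivity you cite concerns $V$, not $\psi$.
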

%\begin{remark}
%\label{rem:phi} 
\begin{remark}
Hamiltonians  $\cH_0 ,\cH$ are invariant under the shifts $\utheta\mapsto \utheta +c {\bf 1}$,
 and, as a result, the Lagrangians $\cL_0 ,\cL$ are infinite whenever $(\uv ,{\bf 1})\neq 0$. 
Also, $\cL_0 (\um , 0) = 0 = \min \cL_0 (\um ,\uv )$. Consequently, 
$\cL (\um ,0 ) = V(\um ) = -\cH (\um ,0 )$, and \eqref{eq:C1} could be 
rewritten as 
\be  
\label{eq:C11}
\sfr_1 = \min_{\um ,\uv  } \cL (\um ,\uv ) = - \max_{\um} \cH (\um ,0 ).
\ee 
\end{remark}
\begin{remark}
Either of  \eqref{eq:C1} and \eqref{eq:C2}   unambiguously characterizes $\sfr_1$, but not $\psi$. 
As we shall explain in the sequel, if $\psi$ satisfies \eqref{eq:C2}, then 
 the weak KAM theory of Fathi \cite{F}  implies that $\psi$ is 
 a viscosity solution  (see Subsection~\ref{sub:VS} for the precise statement) 
on ${\rm int}\lb \Delta_d\rb$ of  the Hamilton-Jacobi equation
\be  
\label{eq:C3}
\cH \lb \um ,\nabla \psi  (\um )\rb = -\sfr_1 . 
\ee
\end{remark}
Note that since $\psi$ is a function on $\Delta_d$, the 
 gradients $\nabla\psi$  lie in the subspace 
\be 
\label{eq:Rn0}
\bbR^d_0 = \lbr \uv  :  (\uv, {\bf 1}) =0\rbr .
\ee 
In general  there might be many viscosity solutions of \eqref{eq:C3} which comply with the conclusions of
Theorem~\ref{thm:A}.  The solutions which are subsequential limits of $\lbr \psi_N\rbr$ 
are  called {\em admissible}. 
Although we expect uniqueness of {global}  admissible solutions for a large class of models,
our approach does not offer a procedure for selecting the latter.   
The viscosity setup is important - at least for a large class of symmetric potentials the global 
admissible solutions are not smooth and develop shocks.  
 A proper selection procedure should be related 
to a more refined analysis of the low lying spectra of 
$\cH_N$,  As it was mentioned in the Introduction sharp asymptotics of  eigenvalues and 
eigenfunctions in  vicinity of potential wells were derived in a much more general context in 
\cite{KR1, KR2, KR3}.  In particular, it is explained therein how such asymptotics are 
related to (smooth) local solutions of \eqref{eq:C3}. 
Implications of these results for a characterization of {\em global} admissible solutions 
is beyond the scope of this work  and hopefully  shall be addressed in full 
generality elsewhere.  In the concluding Section~\ref{sec:S12} we work out a 
particular case of  spin-$\frac12$ models..  
% (see Subsection~\ref{sub:LOSWKAM} for precise definition).

\section{Structure of the theory}
\label{sec:Structure} 
\subsection{Spectrum of $\cH_N$}
\label{sub:Spec}
Let $\bbX_N^{\sfs}$ be the sub-space of $\bbX_N$ which consist of those vectors $\ket{b}$ which do not vanish
under symmetrization. Namely, $\ket{b}  =\sum_{\ualpha} a_{\ualpha}\ket{\ualpha} \in \bbX_N^{\sfs}\setminus 0$,
if $\sum_\pi  a_{\pi \ualpha} \neq 0$ for some $\ualpha\in\cA^N$, where $\pi$ is a permutation 
of $\lbr 1, \dots ,N\rbr$ with $\pi\ualpha (i) = \ualpha (\pi_i )$. The sub-space $\bbX_N^{\sfs}$ is 
invariant for the Hamiltonian $\cH_N$.  The ground state of $\cH_N$ always belongs to $\bbX_N^{\sfs}$. 
For the rest of the paper we shall work with the restriction of $\cH_N$ to $\bbX_N^{\sfs}$. 
\smallskip 

  All eigenfunctions of $\cH_N$  (restricted to $\bbX_N^{\sfs}$) have 
mean-field representatives:
\begin{lemma}
 \label{lem:HNLumping}
If $E_N$ is an eigenvalue of $\cH_N$, then there exists a function $h_N$
on $\Delta_d^N$ such that $\ket{h_N}\df \sum_{\um\in \Delta_d^N}h_N (\um )\, \ket{\um}$ is 
a corresponding eigenfunction:
\be  
\label{eq:HNLumping}
\cH_N 
% \sum_{\um\in \Delta_d^N}h_N (\um )\, \ket{\um} 
\ket{h_N}
= 
E_N \ket{h_N}.
%\sum_{\um\in \Delta_d^N}h_N (\um )\, \ket{\um}
\ee
\end{lemma}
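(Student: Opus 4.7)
The plan is to reduce the lemma to a standard permutation-symmetry argument. The key observation is that $\cH_N$ commutes with the natural action of the symmetric group $S_N$ on $\bbX_N$, and the family $\lbr\ket{\um}\rbr_{\um\in\Delta_d^N}$ is, up to normalization, the image of the product basis under full symmetrization. Hence every eigenvector lying in the symmetric subspace $\bbX_N^{\sfs}$ automatically admits the representation \eqref{eq:GSHN}.

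In detail, I would first introduce the unitaries $U_\pi$ on $\bbX_N$ defined by $U_\pi\ket{\ualpha}=\ket{\pi\ualpha}$ for $\pi\in S_N$, and check that both summands of $-\cH_N$ in \eqref{eq:HN} commute with every $U_\pi$. For the potential term this follows from \eqref{eq:Psium}: the operator $F(\underline{M}^N)$ is diagonal in the product basis with eigenvalues depending only on the profile $\um(\ualpha)$, and $\um(\pi\ualpha)=\um(\ualpha)$ for all $\pi$. For the kinetic term $\sum_i B_i$, permutation of the tensor factors merely relabels the sites, and since all $B_i$'s are copies of the same $B$, the sum is manifestly $U_\pi$-invariant. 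Thus $\cH_N$ commutes with the symmetrization projector $\Pi_{\sfs}=\frac{1}{N!}\sum_{\pi\in S_N}U_\pi$, so its range (the fully symmetric subspace, which I identify with $\bbX_N^{\sfs}$ as implicitly indicated in Subsection~\ref{sub:Spec}) is $\cH_N$-invariant.

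Next I would verify that the orthonormal family $\lbr\ket{\um}\rbr_{\um\in\Delta_d^N}$ is a basis of this symmetric subspace. Indeed, for any $\ualpha_0$ with $\ualpha_0\sim\um$ the orbit of $\ualpha_0$ under $S_N$ consists of exactly the $c_N(\um)$ basis vectors $\ualpha\sim\um$, so
\[
\Pi_{\sfs}\ket{\ualpha_0}\;=\;\frac{1}{N!}\sum_{\pi}\ket{\pi\ualpha_0}\;=\;\frac{c_N(\um)}{N!}\sum_{\ualpha\sim\um}\ket{\ualpha}\;=\;\frac{\sqrt{c_N(\um)}}{N!/|S_{\ualpha_0}|}\,\ket{\um},
\]
up to a nonzero scalar depending on $\um$. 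Orthonormality follows because distinct $\um$ correspond to disjoint orbits, while spanning follows since every symmetric vector is determined by its values on one representative of each orbit.

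The conclusion is then immediate: if $\cH_N\ket{b}=E_N\ket{b}$ with $\ket{b}\in\bbX_N^{\sfs}$, expanding $\ket{b}=\sum_{\um\in\Delta_d^N}h_N(\um)\ket{\um}$ in the above basis gives \eqref{eq:HNLumping}. There is no substantive obstacle here; the only thing requiring a moment of care is to confirm that the definition of $\bbX_N^{\sfs}$ in Subsection~\ref{sub:Spec} really coincides with the range of $\Pi_{\sfs}$, which it does since a vector fails to vanish under $\Pi_{\sfs}$ iff its symmetric component in the decomposition $\bbX_N=\mathrm{Ran}(\Pi_{\sfs})\oplus\ker(\Pi_{\sfs})$ is non-zero, and the eigenfunctions under consideration all live in $\mathrm{Ran}(\Pi_{\sfs})$ by $\cH_N$-invariance.
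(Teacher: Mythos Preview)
Your approach is essentially the paper's: both exploit the $S_N$-invariance of $\cH_N$ to symmetrize an eigenvector, the paper via an explicit coordinate check that $\ket{\pi b_N}\df\sum_{\ualpha}a_{\pi\ualpha}\ket{\ualpha}$ is again an eigenvector and then summing over $\pi$, you via commutation with the projector $\Pi_{\sfs}$. The one imprecise step is your closing identification of $\bbX_N^{\sfs}$ with $\mathrm{Ran}(\Pi_{\sfs})$: the paper's literal definition (vectors not annihilated by symmetrization) is not a subspace, so rather than asserting that a given eigenvector $\ket{b}\in\bbX_N^{\sfs}$ already lies in $\mathrm{Ran}(\Pi_{\sfs})$, you should simply set $\ket{h_N}=\Pi_{\sfs}\ket{b}$---nonzero by the hypothesis on $\ket{b}$, and still an eigenvector by the commutation you established---and then expand in the $\ket{\um}$ basis, which is exactly what the paper's summation over $\pi$ accomplishes.
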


\noindent
{\em Proof.}
Let $E_N$ be an eigenvalue of $\cH_N$.
 Let $\ket{b_N}=\sum_{\ualpha\in\cA^N}a_{\ualpha}\ket{\ualpha} \in\bbX_N^{\sfs}$ be an 
eigenfunction corresponding to the eigenvalue $E_N$. 
% Assume that $\ket{b_N} = \sum_{\ualpha} a_{\ualpha}\ket{\ualpha}$ is a corresponding  eigenfunction. 
Let $C (\ualpha ,\ubeta) = \bra{\ubeta}\hat B\ket{\ualpha}$ be the matrix elements 
of $\hat{B}\df \sum_i B_i$. Thus, $\hat B\ket{\ualpha} = \sum_{\ubeta } C(\ualpha ,\ubeta )\ket{\ubeta}$. 
The eigenfunction equation is recorded as: $\forall\,\ubeta$
\[
 \sum_{\ualpha } a_{\ualpha } C(\ualpha ,\ubeta ) = \lb - E_N - F (\um )\rb a_{\ubeta } .
\]
Note that $C (\ualpha , \ubeta ) = C (\pi\ualpha ,\pi\ubeta )$. Consequently, since in addition $\um (\ubeta ) = \um (\pi \ubeta )$, 
\[
 \sum_{\ualpha } a_{\pi \ualpha } C(\ualpha ,\ubeta ) = \lb - E_N - F (\um )\rb a_{\pi \ubeta } .
\]
Therefore, $\ket{\pi b_N}\df \sum_{\ualpha}a_{\pi\ualpha }\ket{\ualpha }$ is also an eigenfunction.
Since the sum  $\sum_{\pi} a_{\pi \ualpha}$ does not change if we permute the entries of 
$\ualpha$, and since, by assumption $\ket{b_N}\in\bbX_N^0$, the claim follows
with $\ket{h_N} = \sum_{\pi}\ket{\pi b_N}$. 
\qed
\medskip

\subsection{Stochastic Representation}
\label{sub:SR}
  Let $\alpha (t)$ be the continuous time 
Markov chain on $\cA$ with jump rates $\lambda_{\alpha \beta }$. $\bbP^N_{\ualpha}$
is the path measure for $N$ independent copies of such chain starting from
$\ualpha$. 
Then the following representation of the  entries of the density matrix holds 
\cite{AN94, ILN}: For any $T \geq 0$ and any $\ualpha ,\ubeta\in \cA^N$
\be 
\label{eq:DMalpha}
{\rm e}^{-N \lambda T }
\bra{\ubeta} {\rm e}^{-T\cH_N}\ket{\ualpha} 
=\bbE^N_{\ualpha}  {\rm exp}\lbr N \int_0^T  F  (\um (t) )\dd t\rbr \1_{\lbr \ualpha ( T ) 
= \ubeta\rbr }.
\ee
Above $\um (t) = \um (\ualpha (t ))$. 
\medskip 

\subsection{Mean Field Lumping}
\label{sub:MFL}
The process $\um_N (t) = \um (t ) = \um (\ualpha (t))$ is a continuous time Markov chain
on $\Delta_d^N$ with the generator
\be  
\label{eq:umGenerator}
\cG_N f (\um ) = N \sum_{\alpha ,\beta }m_\alpha \lambda_{\alpha \beta }
\lb f\lb \um +\frac{\delta_\beta - \delta_\alpha}{N} \rb- f (\um)\rb . 
\ee 
It is reversible with respect to the measure
\be 
\label{eq:muN}
\mu_N (\um )\df \frac{c_N (\um )}{d^N} .
\ee
Summing up in \eqref{eq:DMalpha}, 
\be  
\label{eq:DMm}
{\rm e}^{-N \lambda T}
\bra{\um^\prime} {\rm e}^{-T\cH_N}\ket{\um} =
\sqrt{\frac{\mu_N (\um ) }{\mu_N (\um^\prime )}}
\bbE_{\um}^N  
 {\rm exp}\lbr N \int_0^T  F  (\um (t) )\dd t\rbr \1_{\lbr \um ( T ) 
= \um^\prime \rbr }, 
\ee
for every $T\geq 0$ and every $\um , \um^\prime\in \Delta_d^N$. 

Using Girsanov's formula one can rewrite \eqref{eq:DMm} in a variety of ways for different modifications
of the jump rates in \eqref{eq:umGenerator}. Namely, let $g$ be a positive function on $\Delta_d^N$. Consider
the modified rates
\[
 \lambda_{\alpha \beta}^{N, g}= \lambda_{\alpha \beta}^{N, g} (\um )  
= \frac{1}{g (\um )}N m_\alpha \lambda_{\alpha \beta }
g \lb\um +(\delta_\beta - \delta_\alpha)/N\rb .
\]
Let $\bbP_{\um}^{N ,g}$ be the corresponding path measure. Then, the right hand side of 
\eqref{eq:DMm} reads as 
\be
\label{eq:rhsg} 
\sqrt{\frac{\mu_N (\um ) g (\um )^2 }{\mu_N (\ump
%^\prime 
)g (\um^\prime )^2}}
\bbE_{\um}^{N ,g}  
 {\rm exp}\lbr N \int_0^T  F_g  (\um (t) )\dd t\rbr \1_{\lbr \um ( T ) 
= \um^\prime \rbr }, 
\ee
where
\be 
\label{eq:Fg}
 F_g (\um ) = \sum_{\alpha \beta} \lambda_{\alpha\beta} \, m_\alpha
\lb \frac{g \lb \um +(\delta_\beta - \delta_\alpha)/N\rb}{ g (\um )} -1\rb + F (\um ).
\ee
A self-suggesting choice is 
\be  
\label{eq:gchoice}
g (\um ) = \frac{1}{\sqrt{\mu_N (\um )}}\ \Rightarrow \ 
 \lambda_{\alpha \beta}^{N, g} = N\sqrt{m_\alpha (m_\beta + 1/N )}\, \lambda_{\alpha\beta} .
\ee
For the rest of the paper we fix $g$ as in \eqref{eq:gchoice}. The corresponding generator 
\be  
\label{eq:umGeneratorg}
 \cG_N^g f (\um ) = \sum_{\alpha ,\beta } \lambda_{\alpha \beta }^{N ,g}
\lb f\lb \um +\frac{\delta_\beta - \delta_\alpha}{N} \rb- f (\um)\rb . 
\ee
is reversible with respect to the uniform measure on $\Delta_d^N$.  The function $F_g $ in 
\eqref{eq:Fg} equals to
\be  
\label{eq:FgV}
F_g (\um ) = - V (\um ) + \Xi_N (\um ), 
\ee
where $V$ is precisely the function defined in \eqref{eq:VE}, and  the correction 
\be
\label{eq:XiN} 
\Xi_N (\um ) = \sum_{\alpha\beta} \lambda_{\alpha \beta }\sqrt{m_\alpha } \lb 
\sqrt{m_\beta +1/N} - \sqrt{m_\beta } \rb .
\ee
All together, \eqref{eq:DMm} reads as
\be 
\label{eq:DMmg}
{\rm e}^{-N \lambda T}
\bra{\um^\prime} {\rm e}^{-T\cH_N}\ket{\um} =
\bbE_{\um}^{N,g}  
 {\rm exp}\lbr  - N \int_0^T \lb V   - \Xi_N\rb(\um (t) )  \dd t\rbr \1_{\lbr \um ( T ) 
= \um^\prime \rbr }, 
\ee
As we shall see below it happens to be convenient to work simultaneously 
with both representations
\eqref{eq:DMm} and \eqref{eq:DMmg}. 

Note that an immediate consequence of \eqref{eq:DMmg} is:
\begin{lemma}
\label{lem:ENLumping} $E_N$ is an eigenvalue of $\cH_N$ with $\ket{u_N} = \sum u_N
(\um )\ket{\um }$ being   the corresponding normalized eigenfunction if and only if 
$u_N$ is also an eigenfunction
of $\cS_N\df \cG_N^g + N F_g = \cG_N^g  - N (V - \Xi_N )$ with 
\be 
\label{eq:RN}
-R_N \df - (N\lambda + E_N )
\ee
 being the 
corresponding eigenvalue.
\end{lemma}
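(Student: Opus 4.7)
The plan is to read Lemma~\ref{lem:ENLumping} as a direct consequence of the Feynman--Kac identity \eqref{eq:DMmg}. The point is that, once one restricts to the mean-field subspace supplied by Lemma~\ref{lem:HNLumping}, the matrix $e^{-N\lambda T}\bra{\um^\prime} e^{-T\cH_N}\ket{\um}$ is literally the transition kernel on $\Delta_d^N$ of the Feynman--Kac semigroup whose generator is $\cS_N=\cG_N^g - N(V-\Xi_N)$. So the two eigenvalue problems must coincide up to the shift by $N\lambda$.

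First I would record that $\cH_N$ is real symmetric in the classical basis $\{\ket{\ualpha}\}$: $F(\underline{M}^N)$ is diagonal by \eqref{eq:Psium}, while $B$ has entries $\lambda_{\alpha\beta}=\lambda_{\beta\alpha}\geq 0$ by A2. Hence $\bra{\um}e^{-T\cH_N}\ket{\um^\prime}=\bra{\um^\prime}e^{-T\cH_N}\ket{\um}$, and since the vectors $\{\ket{\um}\}_{\um\in\Delta_d^N}$ are orthonormal and span a $\cH_N$-invariant subspace, for any mean-field vector $\ket{u_N}=\sum_{\um}u_N(\um)\ket{\um}$ one has $\bra{\um}\ket{u_N}=u_N(\um)$.

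Next, suppose $\cH_N\ket{u_N}=E_N\ket{u_N}$. Then for every $T\geq 0$,
\[
e^{-T(N\lambda+E_N)}u_N(\um)=e^{-N\lambda T}\bra{\um}e^{-T\cH_N}\ket{u_N}
=\sum_{\um^\prime} e^{-N\lambda T}\bra{\um}e^{-T\cH_N}\ket{\um^\prime}\,u_N(\um^\prime).
\]
By \eqref{eq:DMmg} the kernel on the right is exactly $\bbE_{\um}^{N,g}\exp\{-N\int_0^T(V-\Xi_N)(\um(s))\dd s\}\1_{\um(T)=\um^\prime}$, i.e., the kernel of the Feynman--Kac semigroup $e^{T\cS_N}$ acting on $\sfC(\Delta_d^N)$. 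Thus $e^{T\cS_N}u_N=e^{-TR_N}u_N$ for all $T\geq 0$; differentiating at $T=0$ yields $\cS_N u_N=-R_N u_N$.

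For the converse I would run the same identity in reverse: if $\cS_N u_N=-R_N u_N$, then $e^{T\cS_N}u_N=e^{-TR_N}u_N$, and using \eqref{eq:DMmg} together with the symmetry of the kernel one obtains $\bra{\um}e^{-T\cH_N}\ket{u_N}=e^{-TE_N}u_N(\um)$ for all $\um\in\Delta_d^N$ and all $T\geq 0$, whence $\cH_N\ket{u_N}=E_N\ket{u_N}$. The only bookkeeping obstacle is to make sure the direction of the Markov kernel in \eqref{eq:DMmg} matches the direction in which the eigenvalue equation is being tested; this is precisely what the symmetry of $\cH_N$ in the classical basis takes care of. No deeper ingredient is needed beyond \eqref{eq:DMmg} and Lemma~\ref{lem:HNLumping}.
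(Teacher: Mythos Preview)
Your argument is correct and is exactly the approach the paper intends: the authors state the lemma as ``an immediate consequence of \eqref{eq:DMmg}'', and you have simply written out that deduction in full, including the symmetry/reversibility needed to align the direction of the Feynman--Kac kernel with the matrix element $\bra{\um}e^{-T\cH_N}\ket{\um^\prime}$. Nothing more is required.
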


\subsection{The Eigenfunction Equation}
\label{sub:EE}
Assumption {\bf A.2}
 and Perron-Frobenius theorem imply that $\cH_N$ has a non-degenerate ground
state $\ket{h_N} = \sum_{\um } h_N (\um )\ket{\um}$ with strictly
positive entries $h_N (\um  )>0$.  By Lemma~\ref{lem:ENLumping}, 
$h_N (\um )$  is the principal  eigenfunction 
of $\cG_N^g + NF_g (\um )$ with the corresponding top eigenvalue $-R_N^1 = 
- (N\lambda + E_N^1)$.  The corresponding eigenfunction  equation is: For any $T >0$,
\be 
\label{eq:EigenEquation}
 h_N (\um ) = \bbE_{\um }^{N ,g} 
{\rm exp}\lbr - N \int_0^T \lb V -\Xi_N\rb (\um (t) )\dd t + T R_N^1 \rbr h_N  (\um (T)) ,
\ee
By reversibility, 

\begin{lemma}
\label{lem:hatgN}
Functions $\lbr h_N\rbr$ satisfy: For every  $T\geq 0$, $N$ and $\um\in\Delta_d^N$
\be  
\label{eq:hatgN}
h_N (\um ) = \sum_{\um^\prime}
h_N (\um^\prime ) \bbE_{\um^\prime}^{N ,g}
{\rm exp}\lbr- N \int_0^T  \lb V-\Xi_N\rb  (\um (t) )\dd t +TR_N^1 \rbr\1_{\lbr \um (T )=\um\rbr} .
\ee 
\end{lemma}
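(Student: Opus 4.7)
The plan is to deduce \eqref{eq:hatgN} from the eigenfunction equation \eqref{eq:EigenEquation} (valid by Perron--Frobenius together with Lemma~\ref{lem:ENLumping}) by invoking the reversibility of the modified chain noted in Subsection~\ref{sub:MFL}: with the choice \eqref{eq:gchoice}, the generator $\cG_N^g$ of $\bbP_\cdot^{N,g}$ is reversible with respect to the \emph{uniform} measure on $\Delta_d^N$.

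First, I would introduce the Feynman--Kac kernel
\[
K_T(\um , \um^\prime)\df \bbE_{\um}^{N,g}\exp\lbr -N\int_0^T (V-\Xi_N)(\um (t))\dd t + T R_N^1 \rbr\1_{\lbr \um (T)=\um^\prime\rbr} ,
\]
so that \eqref{eq:EigenEquation} reads as $h_N (\um ) = \sum_{\um^\prime}K_T(\um ,\um^\prime )h_N (\um^\prime )$ while the target identity \eqref{eq:hatgN} reads as $h_N (\um ) = \sum_{\um^\prime}K_T(\um^\prime ,\um )h_N (\um^\prime )$. It therefore suffices to show the pointwise symmetry $K_T(\um ,\um^\prime )=K_T(\um^\prime ,\um )$ for every $\um ,\um^\prime \in \Delta_d^N$.

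Next, I would derive this symmetry by time-reversal. Reversibility of $\cG_N^g$ with respect to the uniform measure on $\Delta_d^N$ is equivalent to the detailed-balance identity $\lambda_{\alpha\beta}^{N,g}(\um ) = \lambda_{\beta\alpha}^{N,g}(\um +(\delta_\beta -\delta_\alpha )/N)$, which one checks directly from \eqref{eq:gchoice}. Since the reversible measure is uniform, the transition probabilities themselves are symmetric: $\bbP^{N,g}_\um (\um (T) = \um^\prime ) = \bbP^{N,g}_{\um^\prime}(\um (T) = \um )$; more generally, reversing time on a piecewise-constant trajectory is a measure-preserving bijection between paths from $\um$ to $\um^\prime$ and paths from $\um^\prime $ to $\um $. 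Because the functional $\int_0^T (V-\Xi_N)(\um (t))\dd t$ depends only on the unordered occupation measure of the path, it is invariant under this time-reversal. Consequently, $K_T(\um ,\um^\prime ) = K_T(\um^\prime ,\um )$.

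Combining the symmetry with \eqref{eq:EigenEquation} yields \eqref{eq:hatgN}. No real obstacle is anticipated; the only subtle point is that one must verify the invariance of the Feynman--Kac weight under time-reversal, but this is immediate because the potential $V-\Xi_N$ is a function of the state alone (so the time integral is determined by the trajectory as an unoriented curve), and the terminal time $T$ and constant $R_N^1$ are unaffected by reversal.
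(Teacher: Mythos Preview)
Your proposal is correct and is precisely the argument the paper has in mind: the paper's entire proof is the single phrase ``By reversibility,'' and your expansion---introducing the Feynman--Kac kernel $K_T$, checking detailed balance for $\cG_N^g$ with respect to the uniform measure via \eqref{eq:gchoice}, and using time-reversal invariance of the additive functional $\int_0^T (V-\Xi_N)(\um(t))\,\dd t$ to obtain $K_T(\um,\um')=K_T(\um',\um)$---is exactly how that phrase is to be unpacked.
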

\medskip 

\subsection{Compactness and Large Deviations}
\label{sub:CLD}
For $\um ,\ump\in\Delta_d^N$ define
\be  
Z_T^{N,g} (\um^\prime , \um )\df 
\frac{1}{N}\log 
\bbE_{\um^\prime}^{N,g }
{\rm exp}\lbr - N \int_0^T  (V- \Xi_N ) (\um (t) )\dd t \rbr\1_{\lbr \um (T )=\um\rbr},
\label{eq:ZTNg}
\ee
In the sequel we shall identify $Z_T^{N,g} (\cdot, \cdot )$ with its  continuous 
interpolation on 
$\Delta_d\times\Delta_d$. 

Let $\cA\cC_T$ be the family of all absolutely continuous trajectories 
$\gamma :[0,T]\mapsto \Delta_d$.  For $\um , \ump\in\Delta_d$ define
\be   
\label{eq:ZT}
Z_T^g  (\um^\prime , \um )\df 
- \inftwo{\gamma (0 )=\um^\prime , \gamma (T) = \um}{\gamma\in\cA\cC_T} 
\int_0^T \cL (\gamma (t) , \gamma^\prime (t ))\dd t ,
\ee
where the Lagrangian $\cL$ was defined in \eqref{eq:HL}.  
\begin{theorem}
 \label{thm:LD}
For all $T$ sufficiently large the sequence of functions $\lbr Z_T^{N,g}\rbr$ 
is equi-continuous on 
$\Delta_d\times\Delta_d$
 and uniformly locally Lipschitz on ${\rm int}\lb \Delta_d\times\Delta_d\rb $. 
Furthermore, for all $T$ sufficiently large, 
\be 
\label{eq:LD}
\lim_{N\to\infty}
%\max_{\um ,\um^\prime \in \Delta_d^N}
\left| 
Z_T^{N, g} (\um^\prime , \um ) 
- 
Z_T^g  (\um^\prime , \um )
\right| = 0
\ee
simultaneously for all $\um, \ump\in \Delta_d$.  
\end{theorem}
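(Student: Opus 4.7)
The plan is to deduce Theorem~\ref{thm:LD} by combining three ingredients: (i) a sample-path large deviation principle for $\um_N(\cdot)$ under the tilted law $\bbP^{N,g}_{\um^\prime}$, (ii) a Varadhan-type argument to incorporate the potential $-V+\Xi_N$, and (iii) a direct path-concatenation construction to deliver equi-continuity and local Lipschitz regularity. Uniform convergence on the compact set $\Delta_d\times\Delta_d$ will then follow by an Arzel\`{a}--Ascoli argument.

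For (i), recall from \eqref{eq:gchoice} that under $\bbP^{N,g}_{\um^\prime}$ the chain $\um_N$ has jumps of size $1/N$ at rates $\lambda^{N,g}_{\alpha\beta}(\um)=N\sqrt{m_\alpha(m_\beta+1/N)}\,\lambda_{\alpha\beta}$, so it is density-dependent with coefficients smooth in ${\rm int}(\Delta_d)$. A direct computation of $\frac{1}{N}e^{-N(\utheta,\cdot)}\cG_N^g e^{N(\utheta,\cdot)}$ shows that the associated Feng--Kurtz limiting Hamiltonian is precisely $\cH_0(\um,\utheta)$ of \eqref{eq:HLnot}, whose Legendre transform is $\cL_0$. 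This yields a path-LDP at speed $N$ with rate functional $\gamma\mapsto \int_0^T \cL_0(\gamma(t),\gamma^\prime(t))\,dt$ on trajectories starting at $\um^\prime$. For (ii), since $V$ is bounded and continuous on the compact simplex, and since $\|\Xi_N\|_\infty=O(1/\sqrt N)$ uniformly on $\Delta_d^N$, Varadhan's lemma combined with a standard treatment of the endpoint event $\{\um_N(T)=\um\}$ (sandwich by small balls around $\um$ and let their radius shrink after $N\to\infty$) gives the pointwise limit $Z_T^{N,g}(\um^\prime,\um)\to Z_T^g(\um^\prime,\um)$ in \eqref{eq:LD}.

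For (iii), the equi-continuity is obtained as follows. Given two close endpoint pairs $(\um^\prime_1,\um_1)$ and $(\um^\prime_2,\um_2)$, I concatenate a short correction trajectory from $\um^\prime_1$ to $\um^\prime_2$, a near-optimizer for $Z_T^g(\um^\prime_2,\um_2)$, and a short correction from $\um_2$ to $\um_1$. On ${\rm int}(\Delta_d)$ the Lagrangian $\cL_0(\um,\uv)$ is locally Lipschitz in $\uv$ with constants controlled by the distance to $\partial\Delta_d$, so an $\epsilon$-correction adds $O(\epsilon)$ to the action; this delivers local Lipschitz regularity of $Z_T^g$ and hence of $Z_T^{N,g}$, uniformly in $N$, by running the analogous construction at the discrete level with sequences of $1/N$-jumps approximating a straight segment. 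Equi-continuity up to the boundary is obtained by the same construction once one controls the cost of entering the interior from $\partial\Delta_d$.

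The main obstacle is the boundary behaviour. On $\partial\Delta_d$ the rates $\lambda^{N,g}_{\alpha\beta}$ degenerate, and $\cL_0(\um,\cdot)$ loses the uniform estimates it enjoys in the interior, so both Varadhan's lemma and the concatenation construction require extra care for endpoints on $\partial\Delta_d$. The requirement that $T$ be sufficiently large is exactly a controllability threshold: $T$ must be large enough to afford a short boundary-leaving excursion, perform the near-optimal interior transport, and re-enter a neighbourhood of the target, all at uniformly bounded action. The regularity estimates on $\cL_0$ and on local minimizers established in the Appendix are precisely what is needed to make these boundary excursions cost uniformly $O(1)$ and depend continuously on the endpoint, thereby converting the pointwise LDP into the uniform equi-continuity statement and closing the Arzel\`{a}--Ascoli argument.
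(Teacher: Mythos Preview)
Your overall architecture---sample-path LDP, Varadhan, then a path-concatenation argument for equi-continuity---is exactly the paper's strategy. But two steps, as written, do not close.

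\textbf{Time absorption.} Your concatenation produces a trajectory of duration $T+2\epsilon$, not $T$, so it compares $Z_{T+2\epsilon}^{N,g}$ to $Z_T^{N,g}$ rather than $Z_T^{N,g}$ to itself. The paper handles this by a separate time-rescaling estimate (Lemma~\ref{lem:Apriori}): since under $\bbP^N$ the total number of jumps up to time $T$ is, with overwhelming probability, at most $c_2NT$, the Radon--Nikod\'ym derivative of the map $\um(\cdot)\mapsto\um(\cdot\,T/(T-\epsilon))$ is controlled and one gets $Z_{T-\epsilon}^N\ge Z_T^N-c_1\epsilon$. This is where ``$T$ sufficiently large'' actually enters; it is not a controllability threshold for boundary excursions as you suggest.

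\textbf{Boundary control.} Your appeal to the Appendix is misplaced. The upper bound \eqref{eq:UpperFlow} on $\cL_0$ involves $\log\bigl(1+|f_{\alpha\beta}|/(\sqrt{m_\alpha m_\beta}\,\lambda_{\alpha\beta})\bigr)$ and degenerates as $m_\alpha m_\beta\to 0$, so it does \emph{not} give uniform $O(1)$ cost for excursions touching $\partial\Delta_d$; the Appendix is used for the viscosity argument, not here. The paper sidesteps the boundary issue by first using reversibility to reduce to continuity in one variable, and then passing from $Z_T^{N,g}$ back to $Z_T^N$ via \eqref{eq:rhsg}. Under $\bbP^N$ the chain is a superposition of $N$ \emph{independent} single-particle walks on $\cA$, and this independence yields an explicit combinatorial lower bound on $Z_\epsilon^N(\um^1,\um^2)$ (choose $N\delta_\alpha$ particles at each overpopulated site $\alpha$ and redistribute them, keeping the rest frozen; see \eqref{eq:lb-m1m2}--\eqref{eq:Zep-lb}). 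That bound is valid all the way to $\partial\Delta_d$ and delivers both equi-continuity on $\Delta_d$ and the local Lipschitz property on the interior. Your proposal never uses this independent-particle structure, which is the actual engine of the proof.
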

Note that the equi-continuity   of $\lbr Z_T^{N, g}\rbr$ in 
Theorem~\ref{thm:LD} implies 
that the convergence in \eqref{eq:LD} is actually 
uniform. Consequently, $Z_T^g (\cdot ,\cdot )$ is continuous 
on $\Delta_d\times\Delta_d$
and locally Lipschitz on 
${\rm int}\lb \Delta_d\times\Delta_d\rb $. 
%\blue{Where is the proof of precompactness and local Lipschitz regularity of $\phi_N$?}

Theorem~\ref{thm:LD} is a somewhat standard statement. Its proof will be sketched in Subsection~\ref{sub:LD}.
\medskip 

\subsection{Lax-Oleinik Semigroup and Weak KAM}
\label{sub:LOSWKAM}
Recall the representation of  the leading eigenfunction $h_N (\um ) = {\rm e}^{-N \psi_N (\um )}$.  
 In the sequel we shall identify 
$\psi_N$ with its (continuous ) interpolation on $\Delta_d$; 
$\psi_N  \in \sfC (\Delta_d )$. 
\begin{theorem}
\label{thm:LDUniform}
 The sequence of numbers $R_N^1/N$ is bounded in $\bbR$.  The sequence of functions 
$\lbr \psi_N\rbr$ is 
%uniformly 
%locally uniformly Lipschitz on 
equi-continuous on $\Delta_d$ and uniformly 
locally Lipschitz on ${\rm int}\lb\Delta_d\rb$. 
%${\rm int}\lb \Delta_d\rb$ and it is  pre-compact in 
%$\sfC (\Delta_d )$. 
\end{theorem}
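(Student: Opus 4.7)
The plan is to derive everything from the eigenfunction identity in Lemma~\ref{lem:hatgN}, conveniently rewritten as
\be
\label{eq:hN-ratio}
h_N(\um) = e^{TR_N^1}\sum_{\ump\in\Delta_d^N} h_N(\ump)\, e^{N Z^{N,g}_T(\ump,\um)},
\ee
for $T$ large enough that Theorem~\ref{thm:LD} is in force. By that theorem the family $\{Z^{N,g}_T\}$ is uniformly bounded on $\Delta_d\times\Delta_d$ and shares a common modulus of continuity $\omega_T$.

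The core step, equi-continuity of $\{\psi_N\}$, is a one-line ratio computation. Evaluating \eqref{eq:hN-ratio} at two points $\um_1,\um_2\in\Delta_d^N$ and dividing cancels the unknown factor $e^{TR_N^1}$:
\[
\frac{h_N(\um_1)}{h_N(\um_2)} \;=\; \sum_{\ump} q_{\um_2}(\ump)\, e^{N[Z^{N,g}_T(\ump,\um_1)-Z^{N,g}_T(\ump,\um_2)]},
\]
where $q_{\um_2}(\ump)\propto h_N(\ump)\,e^{NZ^{N,g}_T(\ump,\um_2)}$ is a probability distribution on $\Delta_d^N$. By Theorem~\ref{thm:LD} the bracketed difference is bounded by $\omega_T(|\um_1-\um_2|)$ uniformly in $\ump$, so the ratio lies in $[e^{-N\omega_T(|\um_1-\um_2|)},\,e^{N\omega_T(|\um_1-\um_2|)}]$. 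Taking logarithms and dividing by $N$ yields $|\psi_N(\um_1)-\psi_N(\um_2)|\le \omega_T(|\um_1-\um_2|)$, with a modulus independent of $N$, proving equi-continuity on $\Delta_d$.

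Boundedness of $R_N^1/N$ drops out of the same identity. Retaining only the diagonal term $\ump=\um$ in \eqref{eq:hN-ratio} gives $e^{TR_N^1}e^{NZ^{N,g}_T(\um,\um)}\le 1$, whence
\[
\frac{R_N^1}{N}\;\le\; -\frac{Z^{N,g}_T(\um,\um)}{T}\;\le\;\frac{C}{T}
\]
by the uniform boundedness of $Z^{N,g}_T$. For the matching lower bound, evaluate \eqref{eq:hN-ratio} at a maximizer $\um^\star$ of $h_N$, bound $h_N(\ump)\le h_N(\um^\star)$ inside the sum, and use $|\Delta_d^N|\le(N+1)^d$ together with the uniform upper bound on $Z^{N,g}_T$; this produces $R_N^1/N\ge -C/T -O(\log N /N)$.

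The remaining claim --- uniform local Lipschitz on ${\rm int}(\Delta_d)$ --- is, I expect, the main obstacle. The ratio argument again reduces it to a \emph{uniform in $\ump$} local Lipschitz estimate
\[
|Z^{N,g}_T(\ump,\um_1)-Z^{N,g}_T(\ump,\um_2)|\;\le\; L_K|\um_1-\um_2|,\qquad \um_1,\um_2\in K,
\]
valid on each compact $K\subset{\rm int}(\Delta_d)$. Theorem~\ref{thm:LD} supplies local Lipschitz only on ${\rm int}(\Delta_d\times\Delta_d)$ and thus does not directly control $\ump$ close to $\partial\Delta_d$, where the rates $\lambda^{N,g}_{\alpha\beta}$ degenerate. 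The plan is to exploit the variational formula \eqref{eq:ZT}: for $\um_1,\um_2\in K$ and any starting point $\ump$, an almost-optimal trajectory from $\ump$ to $\um_2$ can be rerouted to end at $\um_1$ by modifying it only on the last sub-interval $[T-\delta,T]$, at a cost controlled by the sup-norm of $\nabla_v\cL$ over a fixed slightly enlarged neighborhood of $K$ --- a quantity independent of $\ump$, since the modification does not touch $\ump$. The regularity properties of $\cL_0$ and of local minimizers established in the Appendix are exactly what is needed to make this construction rigorous, and the uniform convergence \eqref{eq:LD} then transfers the resulting bound from $Z^g_T$ to $Z^{N,g}_T$.
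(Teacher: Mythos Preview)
Your ratio argument for equi-continuity is correct and is precisely how the paper's terse ``inherited from the corresponding properties of $\{Z_T^{N,g}\}$'' should be unpacked; the paper does not spell this out.

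For boundedness of $R_N^1/N$ the paper takes a shorter route: since $-R_N^1$ is the Perron--Frobenius eigenvalue of $\cG_N^g+NF_g$, one has
\[
\frac{R_N^1}{N}=-\frac{1}{N}\lim_{T\to\infty}\frac{1}{T}\log\bbE_{\ump}^{N,g}\exp\Bigl\{N\int_0^T F_g(\um(s))\,\dd s\Bigr\},
\]
which is trapped between $-\max F_g$ and $-\min F_g$ since $F_g$ is bounded on $\Delta_d$. Your eigenfunction-based argument also works and is a reasonable alternative.

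The genuine gap is in the local Lipschitz step. You correctly flag that Theorem~\ref{thm:LD} \emph{as stated} gives local Lipschitz only on ${\rm int}(\Delta_d\times\Delta_d)$, which does not control $\ump$ near $\partial\Delta_d$. But your proposed fix --- establish the uniform-in-$\ump$ Lipschitz bound for the \emph{limit} $Z^g_T$ via the variational formula and the Appendix, then transfer through the uniform convergence \eqref{eq:LD} --- does not work: uniform convergence does not propagate Lipschitz constants. You would only obtain
\[
|\psi_N(\um_1)-\psi_N(\um_2)|\le L_K|\um_1-\um_2|+2\sup|Z_T^{N,g}-Z_T^g|,
\]
with an additive error that, while vanishing as $N\to\infty$, destroys the Lipschitz estimate for fixed $N$.

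The correct resolution is already contained in the \emph{proof} of Theorem~\ref{thm:LD}. The bounds \eqref{eq:ep-bound1} and \eqref{eq:Zep-lb} are manifestly uniform in $\ump$: the first is a Markov-property splitting
\[
Z_T^N(\ump,\um^2)\ge Z_T^N(\ump,\um^1)-c_1\epsilon+Z_\epsilon^N(\um^1,\um^2),
\]
and the second bounds $Z_\epsilon^N(\um^1,\um^2)$ purely in terms of $\um^1,\um^2$. For $\um^1,\um^2$ in a compact $K\subset{\rm int}(\Delta_d)$ (so that all $m_\alpha^1$ are bounded below) one may take $\epsilon\sim|\um^1-\um^2|$ and read off a Lipschitz bound directly at the $N$-level, with no passage through the continuum limit. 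Your ``rerouting near the endpoint'' intuition is exactly right --- but it must be implemented for the Markov chain itself, not for the limiting Lagrangian.
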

\begin{proof}
Since $R_N^1$ is the Perron-Frobenius eigenvalue, 
 \[
\frac{R_N^1	}{N}=-\frac{1}{N}\lim_{T\rightarrow\infty}\frac{1}{T}\log \bbE_{\um'}^{N, g}\exp{\lbr N\int_0^TF_g  (\um (s) )\dd s\rbr}, 
\]
which is bounded since $F_g$ is bounded on $\Delta_d$. 
On the other hand, by \eqref{eq:hatgN}, the equi-continuity 
and the uniform local Lipschitz property
of $\lbr \psi_N\rbr$ is inherited from the 
corresponding properties  of $\lbr Z_T^{N, g}\rbr$.
\end{proof}

\noindent \textbf{Proof of \eqref{eq:e1} and \eqref{eq:C2} of Theorem~\ref{thm:A}:} Theorems ~\ref{thm:LD},~\ref{thm:LDUniform} and Lemma~\ref{lem:hatgN} imply that any accumulation point
 $(\sfr , \psi )\in \bbR\times \sfC (\Delta_d )$ of
the sequence $\lbr \frac{1}{N} R_N^1 , \psi_N \rbr$ satisfies:
$\psi$ is locally Lipschitz on ${\rm int}\lb\Delta_d\rb$ and 
\be  
\label{eq:KAM}
\psi (\um ) = \inf_{\gamma (T) = \um}\lbr \psi\lb \gamma (0)\rb
+ \int_0^T \cL (\gamma (t) , \gamma^\prime (t ))\dd t \rbr - T \sfr 
\df \cU_T \psi (\um ) - T\sfr .
\ee
for every $T\geq 0$ and each $\um\in \Delta_d$.   
Accumulation points of $\psi_N$ are called {\em admissible solutions} 
of \eqref{eq:KAM}. 
Since $\cU_T$ is non-expanding on $\sfC (\Delta_d)$,   
 validity of equation 
\eqref{eq:KAM} unambiguously determines $\sfr$, which implies  that 
the limit $\sfr_1\df \lim_{N\to\infty}\frac{R_N^1}{N}$ indeed exists.  
In view of \eqref{eq:RN} we, therefore, 
 have established \eqref{eq:e1} and \eqref{eq:C2} of Theorem~\ref{thm:A}.\qed 
\medskip 

\subsection{Viscosity Solutions} 
\label{sub:VS} Recall the definition \eqref{eq:HL}
 of $\cH (\um ,\utheta )$. For $\sfr_1$ defined as above consider the 
Hamilton-Jacobi equation
\be 
\label{eq:HJ}
\cH (\um , \nabla\psi (\um )) = -\sfr_1 .
\ee
\begin{definition}
 For $\um\in \Delta_d$ and $\psi\in\sfC (\Delta_d )$ define
lower and upper sub-differentials
\be  
\label{eq:subdif}
D_-\psi (\um ) =
\lbr\xi\in\bbR_{0}^n\sth \liminf_{\um^\prime \to\um}\frac{\psi (\um^\prime ) -
\psi (\um ) - (\xi, \um^\prime -\um )}{|\um^\prime -\um |}\geq 0\rbr ,
\ee
and, similarly, $D_+\psi (\um )$ with $\liminf$ changed to $\limsup$ and the sign of the 
inequality flipped. 

A locally Lipschitz  function $\psi$ is said to be a viscosity supersolution of 
\eqref{eq:HJ} at $\um$ if $\cH (\um ,\xi )\geq -\sfr_1$ for every $\xi\in D_-\psi (\um )$. 
Similarly, it is said to be a viscosity subsolution of 
\eqref{eq:HJ} at $\um$ if $\cH (\um ,\xi )\leq -\sfr_1$ for every $\xi\in D_+\psi (\um )$. 
$\psi$ is viscosity solution of \eqref{eq:HJ}  at $\um$ if it is both sub and super viscosity
solution. 
\end{definition}
\begin{theorem}
\label{thm:constraint}
If $\psi$ is a weak-KAM solution (of \eqref{eq:KAM} with $\sfr = \sfr_1$), then
it is a viscosity solution of \eqref{eq:HJ} on ${\rm int}\lb \Delta_d\rb$.
%and a 
%viscosity super-solution of \eqref{eq:HJ} on $\partial\Delta_d$.   
\end{theorem}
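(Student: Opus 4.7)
The strategy is to extract two complementary pieces of information from the weak-KAM identity \eqref{eq:KAM}. First, as a one-sided inequality: for every absolutely continuous $\gamma:[0,T]\to\Delta_d$ with $\gamma(T)=\um$,
$$\psi(\um)-\psi(\gamma(0))+T\sfr_1\le \int_0^T\cL(\gamma(t),\gamma'(t))\,\dd t.$$
Second, as the existence, for $\um\in{\rm int}(\Delta_d)$ and $T>0$ small, of a backward calibrated curve $\gamma^*:[0,T]\to{\rm int}(\Delta_d)$ with $\gamma^*(T)=\um$ attaining the infimum in \eqref{eq:KAM}, together with existence of a one-sided velocity $(\gamma^*)'(T^-)$. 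The former is used for the subsolution property, the latter for the supersolution property. Existence, interior confinement, and endpoint regularity of calibrated curves are Tonelli-type statements that follow from the convexity, superlinear coercivity (modulo the constraint $(\uv,{\bf 1})=0$), and smoothness of $\cL_0$ recorded in the Appendix.

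\medskip

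\noindent\textbf{Subsolution inequality.} Fix $\um\in{\rm int}(\Delta_d)$, $\xi\in D_+\psi(\um)$, and a test direction $\uv\in\bbR^d_0$. For $T>0$ small, the line $\gamma(t)=\um+(t-T)\uv$ stays in the interior and is admissible, so the first reading yields
$$\psi(\um)-\psi(\um-T\uv)\le T\cL(\um,\uv)+o(T)-T\sfr_1.$$
The upper-subdifferential condition provides the opposite-sign bound $\psi(\um)-\psi(\um-T\uv)\ge T(\xi,\uv)+o(T)$. Subtracting, dividing by $T$, and letting $T\downarrow 0$ gives $(\xi,\uv)-\cL(\um,\uv)\le -\sfr_1$. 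For $\uv\notin\bbR^d_0$ the inequality is vacuous because $\cL(\um,\uv)=+\infty$. Taking the supremum over $\uv$ and using the Fenchel duality $\cH(\um,\xi)=\sup_\uv\{(\xi,\uv)-\cL(\um,\uv)\}$, which holds thanks to the convexity of $\cH_0$ in $\utheta$ seen from \eqref{eq:HLnot}, yields $\cH(\um,\xi)\le -\sfr_1$.

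\medskip

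\noindent\textbf{Supersolution inequality.} Fix $\um\in{\rm int}(\Delta_d)$ and $\xi\in D_-\psi(\um)$. Apply the second reading to obtain a calibrated $\gamma^*:[0,T]\to{\rm int}(\Delta_d)$ with $\gamma^*(T)=\um$. The semigroup property of $\cU_T$ implies that the restriction of $\gamma^*$ to $[s,T]$ is calibrated as well:
$$\psi(\um)-\psi(\gamma^*(s))=\int_s^T\cL(\gamma^*(t),(\gamma^*)'(t))\,\dd t-(T-s)\sfr_1,\qquad s\in[0,T].$$
Setting $\uv^*\df (\gamma^*)'(T^-)$ and using $\gamma^*(s)-\um\approx -(T-s)\uv^*$ together with the lower-subdifferential estimate $\psi(\um)-\psi(\gamma^*(s))\le -(\xi,\gamma^*(s)-\um)+o(T-s)$, division by $T-s$ and the limit $s\uparrow T$ produce $\cL(\um,\uv^*)-(\xi,\uv^*)\le \sfr_1$. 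Hence $\cH(\um,\xi)\ge (\xi,\uv^*)-\cL(\um,\uv^*)\ge -\sfr_1$.

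\medskip

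\noindent\textbf{Main obstacle.} The Fenchel-duality manipulations above are routine; the real work is concentrated in the second reading of \eqref{eq:KAM}, namely the Tonelli-type existence of calibrated minimizers, the fact that for $\um\in{\rm int}(\Delta_d)$ and $T$ small they remain in the interior, and the existence of the endpoint one-sided velocity $(\gamma^*)'(T^-)$. Because $\cL$ is identically $+\infty$ off the hyperplane $\{(\uv,{\bf 1})=0\}$, these properties require strict convexity and superlinear coercivity of $\cL_0(\um,\cdot)$ on $\bbR^d_0$ together with smoothness of $\cH_0$ on ${\rm int}(\Delta_d)\times\bbR^d$; with them in hand, the standard Fathi-type arguments of \cite{F} close the proof, and the Appendix is devoted precisely to verifying these prerequisites.
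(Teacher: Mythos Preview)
Your proof is correct and follows essentially the same route as the paper's own argument, which is the standard Fathi approach \cite{F}: use arbitrary curves ending at $\um$ for the subsolution bound, and the existence and regularity of a calibrated minimizer (supplied by the Appendix) for the supersolution bound. The only cosmetic difference is that the paper phrases both steps via smooth test functions touching $\psi$ from above or below, whereas you work directly with the subdifferentials $D_\pm\psi$ as in the paper's Definition; the two formulations are equivalent and your version is in fact the one literally matching the stated definition.
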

%Existence of a solution $\psi$ as above unambiguously defines $\sfr_1$ 
%\cite{CDL}. The claim C.3 of Theorem~\ref{thm:A} follows. 
The proof of Theorem~\ref{thm:constraint} is relegated to 
Subsection~\ref{sub:PConstraint}.
\medskip 

\subsection{Minima of $ \psi $} 
\label{sub:MVarPhi}
%For $\um\in\Delta_d$ define 
%Note that ${\rm ent}(\um )$ shows up in \eqref{eq:hN} of Theorem~\ref{thm:B}. 
\begin{theorem}
 \label{thm:ent}
Let $\psi$ be a weak KAM solution of \eqref{eq:KAM}. Then all { local} minima  of 
$\psi $ lie in the interiour ${\rm int}(\Delta_d)$. 
\end{theorem}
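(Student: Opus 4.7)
The plan is to show that any local minimum $\um_0$ of $\psi$ must lie in $\cM_V$, and then to verify by a direct computation that $V$ cannot attain its minimum on $\partial\Delta_d$. Combining these two facts will give $\cM_\psi\subset\cM_V\subset{\rm int}(\Delta_d)$.

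\textbf{Step 1.} Let $\um_0$ be a local minimum of $\psi$, with $\psi\geq\psi(\um_0)$ on a neighbourhood $\cN$. For each $T>0$, \eqref{eq:KAM} furnishes a minimizing sequence $\{\gamma_n\}$ of absolutely continuous curves $[0,T]\to \Delta_d$ with $\gamma_n(T)=\um_0$ and
\[
\psi(\gamma_n(0))+\int_0^T \cL(\gamma_n(t),\gamma_n'(t))\,dt \longrightarrow \psi(\um_0)+T\sfr_1.
\]
The bound $\cL\geq\sfr_1$ and boundedness of $\psi$ give a uniform upper bound on the action $\int_0^T\cL_0(\gamma_n,\gamma_n')\,dt$. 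Choosing $\utheta=R\uv/|\uv|$ in the Legendre transform and using $\cH_0(\um,\utheta)\leq C(\cosh(2R)-1)$ uniformly in $\um\in\Delta_d$ yields the \emph{uniform} lower bound $\cL_0(\um,\uv)\geq\tfrac12|\uv|\log|\uv|-C'|\uv|$, from which a Jensen/Young argument gives an equi-continuity modulus of order $1/\log(1/r)$ together with a path-length bound $L(\gamma_n)\leq\rho(T)$ satisfying $\rho(T)\to 0$ as $T\to 0$. Taking $T$ so small that the $\rho(T)$-ball around $\um_0$ lies in $\cN$ forces $\gamma_n(0)\in\cN$ for all large $n$; the local minimum property then gives $\psi(\gamma_n(0))\geq\psi(\um_0)$, hence $\int_0^T\cL(\gamma_n,\gamma_n')\,dt\to T\sfr_1$. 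Decomposing $\cL=\cL_0+V$ with $\cL_0\geq 0$ and $V\geq\sfr_1$ forces both $\int_0^T\cL_0(\gamma_n,\gamma_n')\,dt\to 0$ and $\int_0^T(V(\gamma_n)-\sfr_1)\,dt\to 0$. Arzel\`a--Ascoli supplies a subsequential uniform limit $\gamma_*$ with $\gamma_*(T)=\um_0$, and continuity of $V$ yields $V(\um_0)=\sfr_1$, i.e.\ $\um_0\in\cM_V$.

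\textbf{Step 2.} We verify $\cM_V\cap\partial\Delta_d=\emptyset$. For $\um_0\in\partial\Delta_d$, set $S=\{\alpha:m_\alpha(\um_0)=0\}$ and $S^c=\cA\setminus S$; both sets are non-empty, so by irreducibility of $\lambda$ (Assumption A2) one can find $\alpha_0\in S$, $\alpha_1\in S^c$ with $\lambda_{\alpha_0\alpha_1}>0$. Rewriting \eqref{eq:VE} as $V(\um)=\sum_\alpha m_\alpha\lambda_\alpha-\sum_{\alpha,\beta}\lambda_{\alpha\beta}\sqrt{m_\alpha m_\beta}-F(\um)$ and expanding at $\um_\epsilon=\um_0+\epsilon(\delta_{\alpha_0}-\delta_{\alpha_1})$, the only $\sqrt\epsilon$-order contributions come from summands in which $\alpha$ or $\beta$ equals $\alpha_0$, giving
\[
V(\um_\epsilon)-V(\um_0)=-2\sqrt\epsilon\sum_\beta\lambda_{\alpha_0\beta}\sqrt{m_\beta(\um_0)}+O(\epsilon)\leq -2\sqrt\epsilon\,\lambda_{\alpha_0\alpha_1}\sqrt{m_{\alpha_1}(\um_0)}+O(\epsilon),
\]
which is strictly negative for small $\epsilon>0$. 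Hence $V(\um_0)>\sfr_1=\min V$, so $\um_0\notin\cM_V$, contradicting Step~1.

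The main obstacle is the quantitative path-length estimate $L(\gamma_n)\leq\rho(T)$ with $\rho(T)\to 0$: it relies on a uniform superlinear lower bound for $\cL_0$ across all of $\Delta_d$ (not merely compact subsets of its interior), which is precisely what allows the argument to handle approximate minimizers that may temporarily graze $\partial\Delta_d$ where $\cL_0$ degenerates.
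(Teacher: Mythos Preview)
Your proof is correct and follows essentially the same route as the paper: both arguments exploit the uniform superlinearity of $\cL_0$ to confine near-minimizing curves to a small neighbourhood of a local minimum $\um_0$, then use $\cL\geq V\geq\sfr_1$ to conclude $V(\um_0)=\sfr_1$, and finally invoke the fact that $V$ cannot achieve its minimum on $\partial\Delta_d$. The paper packages the first part as a single inequality
\[
\psi(\um)\ \geq\ \min_{|\um'-\um|\leq\delta}\psi(\um')\ +\ T\min_{|\um'-\um|\leq\delta}\bigl(V(\um')-\sfr_1\bigr),
\]
derived directly from the weak-KAM identity, whereas you pass through a minimizing sequence and an Arzel\`a--Ascoli step; this is only a stylistic difference. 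Your Step~2 supplies the explicit $\sqrt{\epsilon}$-computation showing $\cM_V\subset{\rm int}(\Delta_d)$, which the paper simply asserts.
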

Theorem~\ref{thm:ent} will be proved in Subsection~\ref{sub:PMVArPhi}
\medskip

\subsection{Stochastic Representation of the Ground State}  
\label{sub:SRGS}
The eigenfunction equation \eqref{eq:EigenEquation}
defines a Markovian semi-group
\be 
\label{eq:hatSG} 
\widehat{\bbE}^N_T f (\um ) = 
\frac{1}{h_N (\um )}
\bbE_{\um}^{N ,g} 
{\rm exp}\lbr N \int_0^T  F_g (\um (t) )\dd t + T R_N^1 \rbr h_N (\um (T)) f (\um (T )) .
\ee
This corresponds to continuous time Markov chain with the generator
\be
\label{eq:umGeneratorDoob}
\widehat\cG_{N}^{g} f (\um ) = \frac{1}{h_N (\um )} 
\sum_{\alpha ,\beta } \lambda_{\alpha \beta }^{N ,g}
h_N \lb \um +\frac{\delta_\beta - \delta_\alpha }{N}\rb 
\lb f \lb \um +\frac{\delta_\beta - \delta_\alpha }{N}\rb- f (\um)\rb . 
\ee 
In the sequel we shall refer to $\widehat{\cG}_{N}^{g}$ as to the generator of the 
ground state chain.  
\begin{lemma}
\label{lem:GS} 
The generator 
$\widehat{\cG}_N^g$ is reversible with respect to
the probability  measure $\nu_N (\um )\df h_N^2 (\um )  = {\rm e}^{-2N\psi_N (\um )}$. Furthermore,
 $E_N$ is an eigenvalue of $\cH_N$ if and only 
if $E_N^1 - E_N$ is an eigenvalue of $\widehat{\cG}_N^g$. 
\end{lemma}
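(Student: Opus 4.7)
My plan is to recognize $\widehat{\cG}_N^g$ as a standard Doob $h$-transform of the Schr\"odinger operator $\cS_N = \cG_N^g + N F_g$, from which both reversibility and the spectral correspondence follow in the classical way. The key identity, obtained by expanding $[h_N f](\um') - [h_N f](\um)$ in the telescoping form $h_N(\um')[f(\um') - f(\um)] + [h_N(\um') - h_N(\um)]f(\um)$, is
\[
\widehat{\cG}_N^g f = \frac{1}{h_N}\cG_N^g (h_N f) - \frac{\cG_N^g h_N}{h_N} f.
\]
Since by Lemma~\ref{lem:ENLumping} the ground state satisfies $\cG_N^g h_N = -(N F_g + R_N^1) h_N$, this rearranges into the conjugation
\[
\widehat{\cG}_N^g = \frac{1}{h_N}\lb \cS_N + R_N^1\rb h_N .
\]

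For reversibility, I would compute the off-diagonal entries of $\widehat{\cG}_N^g$ directly from \eqref{eq:umGeneratorDoob}: the jump rate from $\um$ to $\um' = \um + (\delta_\beta - \delta_\alpha)/N$ equals $\lambda_{\alpha\beta}^{N,g}(\um)\, h_N(\um')/h_N(\um)$. Multiplying by $\nu_N(\um) = h_N^2(\um)$ yields $h_N(\um) h_N(\um')\,\lambda_{\alpha\beta}^{N,g}(\um)$. The explicit formula \eqref{eq:gchoice}, together with the symmetry $\lambda_{\alpha\beta} = \lambda_{\beta\alpha}$ from \eqref{eq:rates}, gives $\lambda_{\alpha\beta}^{N,g}(\um) = \lambda_{\beta\alpha}^{N,g}(\um')$; detailed balance then follows from the symmetry in $(\um, \um')$ of this product. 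In other words, the reversibility of the symmetric rates $\lambda_{\alpha\beta}^{N,g}$ with respect to the uniform measure on $\Delta_d^N$ is promoted by the $h$-transform to reversibility of $\widehat{\cG}_N^g$ with respect to $h_N^2$.

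For the spectral correspondence, I would apply the conjugation identity to an eigenfunction. If $u_N$ is an eigenfunction of $\cH_N$ with eigenvalue $E_N$, then by Lemma~\ref{lem:ENLumping} its mean-field representative satisfies $\cS_N u_N = -R_N u_N$ with $R_N = N\lambda + E_N$; setting $f = u_N/h_N$ (legitimate since $h_N > 0$ by Perron--Frobenius, cf.\ Subsection~\ref{sub:EE}) and applying the conjugation yields
\[
\widehat{\cG}_N^g f = \frac{1}{h_N}\lb \cS_N + R_N^1\rb u_N = (R_N^1 - R_N) f = (E_N^1 - E_N) f .
\]
The converse is identical: if $\widehat{\cG}_N^g f = \mu f$, then $u = h_N f$ satisfies $\cS_N u = (\mu - R_N^1) u$, which by Lemma~\ref{lem:ENLumping} corresponds to $\cH_N\ket{u} = (E_N^1 - \mu)\ket{u}$.

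There is no real obstacle here beyond bookkeeping; both statements are standard consequences of the ground-state $h$-transform once Lemma~\ref{lem:ENLumping} has identified $h_N$ as the Perron eigenfunction of $\cS_N$ and Assumption~\textbf{A2} has guaranteed its strict positivity.
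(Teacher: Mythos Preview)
Your proposal is correct and follows essentially the same approach as the paper, which simply declares both parts ``straightforward'': a direct check of detailed balance for $\widehat{\cG}_N^g$ with respect to $h_N^2$, and the observation that $g_N$ is an eigenfunction of $\cS_N$ if and only if $g_N/h_N$ is an eigenfunction of $\widehat{\cG}_N^g$. Your explicit conjugation identity $\widehat{\cG}_N^g = h_N^{-1}(\cS_N + R_N^1)h_N$ is exactly the content behind the paper's one-line remark.
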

It is straightforward to check that $\widehat{\cG}_N^g$ satisfies
the detailed balance condition with respect to $\nu_N$. 
It is  equally straightforward to see  
from \eqref{eq:hatSG}  that $g_N$ is an eigenfunction of $\cG_N^g + NF_g$, 
and hence by Lemma~\ref{lem:ENLumping} of $\cH_N$,  if and only if 
$g_N/h_N$ is an eigenfunction of $\widehat{\cG}_N^g$.  

\section{Proofs}
\label{sec:Proofs} 
\subsection{Proof of Theorem~\ref{thm:LD}}
\label{sub:LD}
Consider the family of processes $\lbr \um (\cdot ) = \um_N (\cdot )\rbr$ with generator 
$\cG_N^g$ defined in \eqref{eq:umGeneratorg}. We shall identify $\um_N$ with its linear 
interpolation. For each $T >0$, the family $\lbr \um_N (\cdot )\rbr$ is exponentially tight
on $\sfC_{0,T}\lb\Delta_d \rb$. 

Recall the definition of $\cL_0$ in \eqref{eq:HLnot}.
One can follow the approach of \cite{DZ} and to combine 
the Large Deviation Principle  for projective limits \cite{DG} with the inverse 
contraction principle of \cite{I91}
in order to conclude:
\begin{lemma}
\label{lem:LD}
For  each $T >0$ and every initial condition $\um\in\Delta_d$ (where for each $N$ we identify 
$\um$ with its discretization $\lfloor N\um\rfloor/N \in \Delta_d^N$) 
the family of processes
$\lbr\um(t)\rbr$ satisfy a large deviations principle 
on $C_{0,T}\lb \Delta_d\rb$ 
with the following good rate function 
\be
I_T (\gamma ) = 
\begin{cases}
\int_0^T\cL_0 (\gamma(s),\gamma^\prime (s)) \dd s, \quad &\text{if $\gamma$ is absolutely continuous}\\
\, &\text{and $\gamma (0)=\um$ .}\\
\infty, &\text{otherwise}. 
\end{cases}
\ee
%where $\cF(\gamma(s),\gamma'(s))=\cL(,\gamma'(s)\gamma(s))+F(\gamma(s))$. 
\end{lemma}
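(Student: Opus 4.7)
The plan is to follow the route indicated by the authors: establish a finite-dimensional LDP via the Dawson-Gärtner projective-limit theorem and upgrade to a full uniform-topology LDP on $C_{0,T}(\Delta_d)$ via the inverse contraction principle of \cite{I91}. The starting point is the semigroup computation: applied to $f(\um)=e^{N(\utheta,\um)}$, the generator $\cG_N^g$ yields
$$
H_N(\um,\utheta)\df\frac{1}{N}e^{-N(\utheta,\um)}\cG_N^g f(\um)=\sum_{\alpha,\beta}\sqrt{m_\alpha(m_\beta+1/N)}\,\lambda_{\alpha\beta}\bigl(e^{\theta_\beta-\theta_\alpha}-1\bigr),
$$
which, using $\lambda_{\alpha\beta}=\lambda_{\beta\alpha}$, converges uniformly on $\Delta_d$ (for $\utheta$ in compact sets) to $\cH_0(\um,\utheta)$. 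Consequently $M_\utheta^N(t)\df\exp\bigl\{N(\utheta,\um_N(t)-\um_N(0))-N\int_0^t H_N(\um_N(s),\utheta)\,ds\bigr\}$ is an exponential martingale, identifying the Legendre dual $\cL_0$ of $\cH_0$ as the natural candidate for the integrand of the rate function.

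Exponential tightness of $\{\um_N(\cdot)\}$ on $C_{0,T}(\Delta_d)$ in the uniform topology then follows from these martingales combined with the fact that jumps have magnitude $1/N$ while the total jump rate is $O(N)$: the standard Garsia-type argument yields an exponential bound on the modulus of continuity of $\um_N$ uniform in $N$.

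For a partition $0=t_0<t_1<\cdots<t_k=T$, iterated application of Varadhan's lemma to the semigroup $e^{t\cG_N^g}$ shows that the finite-dimensional marginals $(\um_N(t_0),\ldots,\um_N(t_k))$ satisfy an LDP with rate function
$$
\sum_{j=0}^{k-1} J_{t_{j+1}-t_j}\bigl(\um_j,\um_{j+1}\bigr),\qquad J_t(\um,\ump)=\inf\Bigl\{\int_0^t\cL_0(\gamma(s),\gamma^\prime(s))\,ds:\,\gamma(0)=\um,\,\gamma(t)=\ump\Bigr\}.
$$
The Dawson-Gärtner theorem assembles these into an LDP on $C_{0,T}(\Delta_d)$ with the topology of pointwise convergence, and the exponential tightness above, through the inverse contraction principle of \cite{I91}, promotes this to an LDP in the uniform topology. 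A standard lower-semicontinuity and convexity argument then identifies the projective-limit rate with $\int_0^T\cL_0(\gamma(s),\gamma^\prime(s))\,ds$ on absolutely continuous paths starting at $\um$, and with $+\infty$ elsewhere.

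The main technical obstacle is the degeneracy of the rates on $\partial\Delta_d$. The prefactor $\sqrt{m_\alpha m_\beta}$ vanishes whenever $m_\alpha=0$, so $\cH_0(\um,\cdot)$ ceases to be strictly convex in the corresponding $\theta_\alpha$-directions and $\cL_0(\um,\cdot)$ carries hard equality constraints rather than merely exhibiting superlinear growth. Compactness of $\Delta_d$, together with the fact that $\cL_0=+\infty$ on velocities $\uv$ with $(\uv,{\bf 1})\neq 0$, suffices for $I_T$ to be a good rate function; the finer regularity of $\cL_0$ and of its local minimizers near $\partial\Delta_d$ needed to close this step cleanly is exactly the content of the Appendix alluded to in the introduction.
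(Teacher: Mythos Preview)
Your proposal is correct and follows precisely the route the paper sketches: the paper itself merely states that one should ``follow the approach of \cite{DZ} and combine the Large Deviation Principle for projective limits \cite{DG} with the inverse contraction principle of \cite{I91}'', without further detail. You have fleshed out this outline faithfully---the generator computation yielding $\cH_0$, exponential tightness, the Dawson--G\"artner assembly of finite-dimensional LDPs, and the upgrade to the uniform topology via \cite{I91}---so there is nothing to correct or contrast.
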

By the upper bound in Varadhan's lemma,
\[
 \limsup_{N\to\infty}Z_T^{N, g} (\ump ,\um ) \leq Z_{T}^{g} (\ump ,\um )
\]
On the other hand, by the lower bound in Varadhan's lemma, for each $\delta >0$
\[
 Z_T^g (\ump ,\um ) \leq \liminf_{N\to\infty} \sup_{|\um_1 - \um |<\delta} Z_T^{N ,g} (\ump ,\um_1 ).
\]
Therefore,  \eqref{eq:LD} is a consequence of the claimed continuity of $\um\to Z_T^{N,g} (\ump ,\um )$. 
Let us proceed with establishing the asserted continuity properties of the family  
$Z_T^{N,g} (\cdot ,\cdot )$. 
By reversibility, 
\begin{equation}
 \label{eq:revZTN}
Z_T^{N, g}  (\um , \ump ) =  Z_T^{N ,g}  (\ump , \um ) , 
\end{equation}
so it would be enough to explore those in the second variable only. 

An equivalent task is to check continuity properties of
\be  
Z_T^{N} (\um^\prime , \um )\df 
\frac{1}{N}\log 
\bbE_{\um^\prime}^{N }
{\rm exp}\lbr N \int_0^T  F (\um (t) )\dd t \rbr\1_{\lbr \um (T )=\um\rbr},
\label{eq:ZTN}
\ee
Indeed, by \eqref{eq:rhsg}
\[
Z_T^{N ,g} (\um , \ump ) =  \sqrt{\frac{\mu_N (\um ) }{\mu_N (\ump
%^\prime 
)}}Z_T^{N} (\um , \ump )
\]
Now, under $\bbP_N$ the process $\um_N (t)$ is a super-position of $N$ independent particles which hop on 
the finite set  $\cA$ with irreducible rates $\lambda_{\alpha \beta }$.  Since $F$ is bounded, the following
claim is straightforward:  
\begin{lemma}
 \label{lem:Apriori}
There exist  $T_0 > 0$, $\epsilon_0 >0$ and 
%two constants 
a constant $c_1 <\infty$
% and $c_2$ 
such that
\begin{equation}
 \label{eq:Apriori}
{\rm e}^{N Z_{T-\epsilon}^N (\ump ,\um )} \geq {\rm e}^{-c_1 N\epsilon }
{\rm e}^{N Z_{T}^N (\ump ,\um )}  .
\end{equation}
uniformly in $N$, $T\geq T_0$, $\epsilon \leq \epsilon_0$, $\ump, \um\in\Delta_d$ and $N$. 
\end{lemma}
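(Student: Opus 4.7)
The plan is to apply the Perron--Frobenius theorem to the Feynman--Kac generator $A_N := \cG_N + NF$, with $\cG_N$ as in \eqref{eq:umGenerator}. Since $\cG_N$ is reversible with respect to $\mu_N$, $A_N$ is self-adjoint on $L^2(\mu_N)$; Perron--Frobenius then yields a simple top eigenvalue $\Lambda_1^N$ with strictly positive eigenfunction $\psi_1^N$. Because $\cG_N$ has spectral top zero while $NF$ is a bounded multiplication operator, one has the elementary bound $\Lambda_1^N \leq N\|F\|_\infty$, and this is the only place that the hypothesis $F$ bounded enters.

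Next, identifying the Feynman--Kac kernel with the semigroup, ${\rm e}^{NZ_t^N(\ump,\um)} = ({\rm e}^{tA_N})(\ump,\um)$, I would perform the Doob $h$-transform via $\psi_1^N$: the kernel
\begin{equation*}
\hat p_t^N(\ump,\um)\ :=\ {\rm e}^{-t\Lambda_1^N}\,{\rm e}^{NZ_t^N(\ump,\um)}\,\psi_1^N(\um)/\psi_1^N(\ump)
\end{equation*}
is a reversible Markov transition kernel on $\Delta_d^N$ with stationary measure $\pi_N \propto (\psi_1^N)^2\mu_N$. Then
\begin{equation*}
{\rm e}^{N[Z_T^N(\ump,\um) - Z_{T-\epsilon}^N(\ump,\um)]}\ =\ {\rm e}^{\epsilon\Lambda_1^N}\,\hat p_T^N(\ump,\um)/\hat p_{T-\epsilon}^N(\ump,\um),
\end{equation*}
so the claim is equivalent to a uniform-in-$(\ump,\um,N)$ upper bound on the ratio $\hat p_T^N/\hat p_{T-\epsilon}^N$ on the range $T \geq T_0$. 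Note that the direction of the inequality is the ``hard'' one: the reverse bound $Z_{T-\epsilon}^N \leq Z_T^N + c_0\epsilon$ follows at once by appending a holding at $\um$ during $[T-\epsilon,T]$, which costs only ${\rm e}^{-c_0 N\epsilon}$, whereas the present direction captures the fact that the Feynman--Kac kernel cannot grow in $T$ faster than dictated by the top eigenvalue.

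The principal obstacle is producing this uniform mixing estimate for the tilted chain. For the mean-field models considered, the $h$-transformed generator inherits an $O(1)$ spectral gap (uniform in $N$) from the strict convexity of the effective potential $V$ at its minimizer, and a standard Poincar\'e argument then delivers $\hat p_T^N(\ump,\um)/\pi_N(\um) \in [c_0, C_0]$ on a time scale $T_0$ independent of $N$. Combined with $\Lambda_1^N \leq N\|F\|_\infty$ this yields $Z_T^N - Z_{T-\epsilon}^N \leq \epsilon\|F\|_\infty + (\log(C_0/c_0))/N$, and the residual $1/N$ is absorbed into an enlarged $c_1$. The complementary regime $\epsilon \leq 1/N$ is handled by the trivial Duhamel expansion ${\rm e}^{\epsilon A_N} = I + \epsilon A_N + \bigO(\epsilon^2 \|A_N\|^2)$ together with $\|A_N\|_{L^\infty} = \bigO(N)$.
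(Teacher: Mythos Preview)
Your approach has a genuine gap: the claimed uniform $O(1)$ spectral gap for the $h$-transformed chain is false in general. The $h$-transformed generator is precisely the ground-state chain $\widehat{\cG}_N^g$ of Subsection~\ref{sub:SRGS}, and by Lemma~\ref{lem:GS} its spectral gap equals the gap $E_N^2 - E_N^1$ of $\cH_N$. Whenever the effective potential $V$ has several minima --- which is exactly the regime of interest in Section~\ref{sec:S12} (e.g.\ $p$-body interactions at $\lambda\leq\lambda_c$) --- this gap is exponentially small in $N$; see Proposition~\ref{prop:minpsi} and the surrounding discussion of multiple wells. Your appeal to ``strict convexity of $V$ at its minimizer'' is therefore an extra hypothesis the lemma does not have, and one that fails in the cases the paper cares most about. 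Even in the single-well case, turning an $O(1)$ $L^2$-gap into the pointwise bound $\hat p_T^N(\ump,\um)/\pi_N(\um)\in[c_0,C_0]$ uniformly in $N$ and in $(\ump,\um)$ on a fixed time scale $T_0$ is not automatic, since $\min_{\um}\pi_N(\um)$ is typically $e^{-cN}$. The Duhamel patch for $\epsilon\leq 1/N$ is also incomplete: the expansion gives an additive error of size $O(\epsilon N)\cdot\sup_{s,\um''} K_s(\um'',\um)$, which does not yield the required \emph{multiplicative} control of $K_T(\ump,\um)/K_{T-\epsilon}(\ump,\um)$ when the kernel at $(\ump,\um)$ happens to be much smaller than its maximum over the first argument.

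The paper's argument avoids all of this by a completely elementary time-rescaling. One maps each trajectory $\um(\cdot)$ on $[0,T]$ to $\tilde\um(t)=\um(tT/(T-\epsilon))$ on $[0,T-\epsilon]$; the Radon--Nikod\'ym derivative between the law of $\tilde\um$ and the original chain on $[0,T-\epsilon]$ is $\exp\{O(\epsilon N)\}\cdot(T/(T-\epsilon))^K$ where $K$ is the total number of jumps. Since under $\bbP^N$ the process is a superposition of $N$ independent particles with bounded rates, one has $K\leq c_2 NT$ up to events of probability $e^{-cN}$, so $(T/(T-\epsilon))^K\leq e^{c_2N\epsilon}$ for $T\geq T_0$ and $\epsilon\leq\epsilon_0$. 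Together with $|\int_0^T F-\int_0^{T-\epsilon} F\circ\text{rescale}|\leq\epsilon\|F\|_\infty$ this gives \eqref{eq:Apriori} directly, with no spectral input whatsoever.
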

Indeed,  trajectories $\um (\cdot )$ on $[0,T]$ and trajectories $\tilde{\um} (\cdot )$ on $[0,T-\varepsilon]$: are related by the following 
one to one map:
$\tilde{\um}(t)=\um(t\frac{T}{T-\varepsilon})$.
Since for some $c_2 = c_2 (T_0 ) >0$, 
up to exponentially small factors, 
the total number of jumps of all the particles is at  most  $c_2N T$, 
the Radon - Nikod\'{y}m derivative is under control and \eqref{eq:Apriori} 
follows. \qed
\smallskip 

As a result,  for any $T, \epsilon$ as above, and for any $\ump , \um^1 , \um^2\in \Delta_d$, 
\begin{equation}
\label{eq:ep-bound1} 
Z_T^N (\ump , \um^2 )\geq   Z_T^N (\ump , \um^1 ) - c_1\epsilon + Z_\epsilon^N (\um^1 , \um^2 ).
\end{equation}
Fix  $\um^1 , \um^2$ and define $\cA_+ = \cA_+ (\um^1 , \um^2 )=\lbr\alpha : m^1_\alpha >
m^2_\alpha\rbr$.  For $\alpha\in \cA_+$ define $\delta_\alpha = m^1_\alpha - m^2_\alpha$. 
One way to drive $\um (\cdot )$ from $\um^1$ to $\um^2$ during $\epsilon$ units of time
is to choose $N\delta_\alpha$ particles out of $Nm^1_\alpha$ for each $\alpha\in\cA_+$, and to 
redistribute them during $\epsilon$ units of time into $\cA\setminus\cA_{+}$, 
without touching the rest of the particles.
  There is an obvious uniform lower bound 
$c_3\epsilon^n$ that a particle starting at the state $\alpha$ will be at state $\beta$ at time 
$\epsilon$.  
%Set $\bar\lambda = \max_\alpha\sum_\beta \lambda (\alpha ,\beta )$. 
We infer:
\begin{equation}
\label{eq:lb-m1m2}
{\rm e}^{N Z_\epsilon^N (\um^1 , \um^2 )}
\geq {\rm e}^{-\lb  \max_\alpha\sum_\beta \lambda_{\alpha ,\beta } -\min F\rb\epsilon N}
\prod_{\alpha\in\cA_+}{Nm^1_\alpha\choose N\delta_\alpha} \lb c_3\epsilon^n\rb^{N\delta_\alpha} .
\end{equation} 
Hence, 
\begin{equation}
 \label{eq:Zep-lb}
Z_\epsilon^N (\um^1 , \um^2 ) \geq -c_4\epsilon - c_5\sum_{\alpha\in \cA_+}\delta_\alpha
\lb d \log \frac{1}{\epsilon} - \log\frac{m^1_\alpha}{\delta_\alpha}\rb .
\end{equation}
Both, the equi-continuity of $\um\to Z_T^N (\ump , \um )$ on $\Delta_d$ and its 
uniform local Lipschitz property on ${\rm int}\lb \Delta_d\rb$ readily follow
from  \eqref{eq:ep-bound1} and \eqref{eq:Zep-lb}.
\qed

\subsection{Proof of Theorem~\ref{thm:constraint}}
\label{sub:PConstraint}
We follow the approach of \cite{F}:  Let $\um\in {\rm int}(\Delta )$ and assume that $u$ is a smooth
function such that $\lbr \um \rbr = {\rm argmin}\lbr u - \psi\rbr$ in a neighbourhood of $\um$.  Then, 
\[
 u(\um )\leq u(\gamma (-t )) +\int_{-t}^0 \cL (\gamma , \gamma^\prime )\dd s -\sfr_1t 
\]
for any $t\geq 0$ and for any smooth curve $\gamma$ with $\gamma (0) =\um$.  Let 
$\uv = \gamma^\prime (0 )$.  Then,  
\[
 \nabla u (\um )\cdot v -\cL (\um , \uv )\leq -\sfr_1 .
\]
Since the above holds for any $\um\in\bbR_0^n$, $\cH (\um , \nabla u (\um ))\leq -\sfr_1$ follows.

In order to check that $\psi$ is a super-solution, note that by the upper and lower bounds on the 
Lagrangian $\cL$ derived in the Appendix, and by the local Lipschitz property of (bounded and continuous) $\psi$   the minimum 
\[
 \min_{\gamma (t_0 )=\um}
\lbr \psi (\gamma (0)) +\int_0^{t_0}\lb\cL (\gamma , \gamma^\prime ) - \sfr_1\rb\dd s\rbr 
\]
is attained at some $\gamma_*$ with $\gamma_*(0)=\ump$ in a $\delta_0$-neighbourhood of $\um$, for all $t_0$ and $\delta_0$ 
appropriately small.  As it is explained in the Appendix, the minimizing curve $\gamma_*$ is 
$C^\infty$ and stays inside ${\rm int}\lb \Delta_d \rb$.  Evidently, 
\[
 \psi (\um ) = \psi \lb \gamma_* (t)\rb +\int_t^{t_0} \lb\cL (\gamma_* , \gamma^\prime_* ) 
- \sfr_1\rb\dd s
\]
for every $t\in [0, t_0 ]$. Assume that $u$ is smooth and ${\rm argmax}\lbr u - \psi\rbr =\lbr \um\rbr$
in a $\delta_0$ neighbourhood of $\um$.  Then, 
\[
 u (\um ) = u (\gamma_* (t_0 ))  \geq u  \lb \gamma_* (t)
\rb +\int_t^{t_0} \lb\cL (\gamma_* , \gamma^\prime_* ) - \sfr_1\rb\dd s
\]
for every $t\in [0, t_0 ]$.  Set $\uv = \gamma_*^\prime (t_0 )$. We infer:
\[
 \nabla u (\um )\cdot \uv\geq \cL (\um , \uv ) - \sfr_1 .
\]
Consequently, $\cH (\um , \nabla u (\um ))\geq  -\sfr_1$.\qed 
\vskip 0.1in

\subsection{Proof of \eqref{eq:C1} of
 Theorem~\ref{thm:A} and Theorem~\ref{thm:ent}}
\label{sub:PMVArPhi}
By Theorem~\ref{thm:LDUniform} and since $\nu_N (\um ) = {\rm e}^{-2N\psi_N (\um )}$ it
follows that 
\[
 \min_{\um } \psi (\um ) = 0. 
\]
Let us rewrite \eqref{eq:C2} as 
\begin{equation}
\label{eq:BoundOnPsi} 
\psi (\um ) = \inf_{\gamma (T) = \um }
\lbr \psi \lb\gamma (0)\rb   +\int_0^T\lb \cL(\gamma , \gamma^\prime ) -\sfr_1\rb
\dd t \rbr 
\end{equation}
Since $V (\um ) = \cL (\um , 0 )\leq \cL (\um , \uv )$, the above 
%By \eqref{eq:C11} we readily conclude that the above 
might be possible only if 
$\sfr_1 =  \min_{\um} V (\um )$. 
\smallskip 

Furthermore, the Lagrangian $\cL$ is uniformly super-linear in the second variable: 
 By \eqref{eq:lbL0} of the Appendix for every $C>0$ and $\delta >0$ we can find $T>0$ such that
\[
 \inf_{{\rm diam}( \gamma ) >\delta}\int_0^T \lb \cL(\gamma , \gamma^\prime ) -\sfr_1\rb \dd t \geq C. 
\]
Which means that for $C > \max \psi$, the contribution to \eqref{eq:BoundOnPsi} for $\gamma$-s with the 
diameter larger than $\delta$ could be ignored. 

Let, therefore,   ${\rm diam}(\gamma ) \leq \delta $ .
By \eqref{eq:C11},
\begin{equation}
\label{eq:ConvL} 
\int_0^T\lb \cL(\gamma , \gamma^\prime ) -\sfr_1\rb
\dd t \geq T\min_{\ump\in\gamma }\lb V (\ump ) - \sfr_1\rb 
\end{equation}
We infer:
\begin{equation}
 \label{eq:MinPsi}
\psi (\um ) \geq \min_{|\ump - \um |\leq\delta }\psi (\ump) + T \min_{|\ump - \um |\leq\delta}\lb 
V (\ump )  -\sfr_1\rb .
\end{equation}
%By \eqref{eq:C2} $\min_{\ump}\lb \cL (\ump ,0) -\sfr_1 \rb \geq 0$. 
%Since $\psi$ attains its minimum, and since we already know that $V (\um )\geq \sfr_1$, 
%\eqref{eq:C1} follows. Finally,
The claim of Theorem~\ref{thm:ent}  follows
as soon as we notice that all the minima of $\um\mapsto V (\um  )$ belong to ${\rm int}\lb\Delta_d\rb$. 
\qed

\section{
%Spin-$\frac12$ Models
Results for Spin-$\frac12$ Model
}
\label{sec:S12}
For spin-$\sfs$ models \eqref{eq:spins}
% the Hamiltonian \eqref{eq:HL} is of the form
%\[
% \cH (\um,\utheta ) = \frac{\lambda}{2}\sum_{|\alpha -\beta |=1 }
%m_\alpha \sqrt{\sfs (\sfs +1 ) - \alpha\beta}\lb {\rm e}^{\theta_\beta -\theta_\alpha }
%-1\rb  + F (\um ) .
%\]
%Consequently, 
\be 
\label{eq:r1S}
\sfr_1 = \min_{\um} V(\um )=  \min_{\um}\lbr \frac{\lambda}{4} \sum_{|\alpha -\beta |=1 }
\sqrt{\sfs (\sfs +1 ) - \alpha\beta} \lb \sqrt{m_\alpha} - \sqrt{m_\beta}\rb^2 - 
F\lb
% \lb\sum_\alpha \alpha m_\alpha\rb^2
\um 
\rb  \rbr .
\ee
%\blue{I guess that minima of $\psi^*$ should be also solutions to 
%\eqref{eq:r1S}. I did not have time to study the latter problem, you are welcome to
%try and to draw consequences. It is also not clear to me whether one can detect
%saddle points of $\psi^*$ by looking at the Hamiltonian $\cH$. In view of 
%\eqref{eq:Eigenvalues} this would be of a great value.}
%\smallskip
%\qed
%\medskip

%\subsection{Results for Spin-$\frac12$ Model}
%\label{sub:ResultsOneHalf}
In spin-$\frac12$ case it is 
%usually 
convenient to take $\{-1,1\}$ instead of  $\{-\frac12,\frac12\}$ as a set of classical labels for Spin-$\frac12$ Model. 
%Then the 
The  Hamiltonian is given by 
\[
-\cH_N=N F\lb 
%\frac1{2N^2}\sum_{i\neq j}\hat{\sigma}^z_i\hat{\sigma}^z_j
M_{-1}^N , M^N_1 \rb +\lambda\sum_i\hat{\sigma}^x_i
\] 
where
\begin{equation}
\hat{\sigma}^x\, =\, \lb
\begin{array}{cc}
0  &1\\
1 &0
\end{array}
\rb\qquad
\text{and}\qquad
\hat{\sigma}^z\, =\, \lb
\begin{array}{cc}
1  &0\\
0 &-1
\end{array}
\rb .
\end{equation}
%Consequently, 
In this notation $\hat{\sigma}^z\ket{\alpha}=\alpha\ket{\alpha}$ and $\hat{\sigma}^x\ket{\alpha}= \ket{-\alpha}$ 
for $\alpha =\pm 1$. 

The simplex $\Delta_d$ is just a segment $\lbr (m_{-1} , m_{1}): m_{-1} +m_{1} =1\rbr$, parametrized by
a single variable $m = m_1 - m_{-1}\in [-1, 1]$. Any vector $\utheta\in \bbR^2_0$ is of 
the form $\utheta = (\theta , -\theta )$. 
Define $F (m ) = F\lb \frac{1-m}{2} , \frac{1+m}{2}\rb$. 
Thus, in terms of $m$ and $\theta$, the Hamiltonian in \eqref{eq:HL} is 
\begin{equation}
\label{eq:HCW} 
\cH (m , \theta ) = 
\lambda\sqrt{1 - m^2}\, {\rm cosh}( 2\theta ) - \lambda + F (m) .
%\lambda\lbr\frac{1+m}{2}\lb {\rm e}^{-2\theta} -1\rb +
% \frac{1-m}{2}\lb {\rm e}^{2\theta} -1\rb\rbr + \tilde F (m ), 
\end{equation}
Consequently, the effective potential 
\be 
\label{eq:effV}
V (m ) = -\cH (m ,0) = 
\lambda - \lb \lambda\sqrt{1 - m^2} + F (m)\rb ,
\ee
and the asymptotic leading eigenvalue $\sfr_1$
is  given by
\be
\label{eq:V12}
\sfr_1 = \min_{m\in (-1,1 )} V(m ) = \lambda - \max_{m\in [-1,1 ]}\lbr \lambda\sqrt{1 - m^2} + F (m)\rbr .
\ee
\subsection{Minima of $V$}
\label{sub:minV}
In order to explore the minimization problem \eqref{eq:V12} it 
would be convenient to 
represent $ F (m  ) = - G \lb {\rm sign} (m) \sqrt{1-m^2 }\rb$. Then, 
%$\sfr_1 =  \sfr_1^+\wedge \sfr_1^-$, where
\begin{equation}
 \label{eq:r1OneHalf}
\sfr_1   = \lambda - \max_{-1 \leq t\leq 1}\lbr \lambda |t | - G ( t)\rbr .
\end{equation}
Indeed, as it clearly seen from \eqref{eq:V12} (and as it follows in general  by Theorem~\ref{thm:ent}),
 all the minima of $V $ belong to 
$(-1,1)$, a possible jump discontinuity of $G$ at zero (if $ F (-1 )\neq  F (1)$) plays no role 
for the computation of 
maxima.  Note also that $G (-1) = G (1) = F (0 )$. 

 Let
\[
 \cM_\lambda = {\rm argmim}\lb V\rb = \lbr m\in (-1,1) : \cH (m , 0) = -\sfr_1\rbr .
\]
In other words,
 let $\cT_\lambda  \subset [-1,1]$ be the set of maximizers in \eqref{eq:r1OneHalf}. 
We set $\cT_\lambda = \cT_\lambda^+\cup\cT_\lambda^-$, where $\cT_\lambda^+ = 
\cT_\lambda\cap (0,1)$. 
Then, 
\begin{equation} 
\label{eq:ATlambda}
m\in\cM_\lambda \Leftrightarrow t= {\rm sign} (m) \sqrt{1-m^2}\in \cT_\lambda^{{\rm sign} (m)}  .
\end{equation}
By assumption {\bf A1}, $G$ is a polynomial of finite degree on each of the intervals 
$[-1,0)$ and $(0, 1]$ , so the set $\cT_\lambda$ is 
finite, and its maximal cardinality is ${\rm deg}(G )-1$.  Various options are depicted on 
Figure~\ref{fig:G} (for simplicity we depict only the $(0,1]$ interval and, accordingly, the set 
$\cT_\lambda^+$): 

(a)  First of all there exists $\lambda_c\in [0,\infty)$, such that $\cT_\lambda =\lbr \pm 1\rbr $ 
on $(\lambda_c ,\infty )$. 

(b) If $\lambda_c >0$, then $\cT_{\lambda_c}$ still contains $\pm 1$. It could happen, however,
that $\cT_{\lambda_c} = \lbr -1 ,t_1, \dots , t_k ,1\rbr$ contains at most $k\leq {\rm deg}(G )/2$ other points.
 In the latter case  $(-1,0)\cup (0 ,1 )$ necessarily contains at least $2k$  inflection 
points  of $G$. 

(c) There might be other exceptional values of $\lambda <\lambda_c$ for which either of 
$\cT_\lambda^\pm$ 
% =\cT_\lambda^-\cup\cT_\lambda^+ =  \lbr t_1 ^\lambda , \dots t_{k }^\lambda \rbr$
 is not a singleton. 
If, for instance $\cT_\lambda^+ =\lbr t_1^\lambda ,\dots ,  t_{k }^\lambda \rbr  $ is not a singleton, then 
%In this case 
the interval $( t_1^\lambda , t_{k }^\lambda )$ contains 
at least $2(k-1)$ inflection points of $G$. Since there are at most ${\rm deg}(G )-2$ inflection 
points all together, and since intervals spanned by  different $\cT_\lambda^\pm$ are disjoint, we infer
that $\cT_\lambda^\pm$ is not a singleton for at most ${\rm deg}(G )/2$ values of $\lambda$.  
\smallskip 

Values of $\lambda$ for which the cardinality of $\cT_\lambda$ changes  
%is not a singleton 
correspond to first order 
phase transitions in the ground state.  
\smallskip 

%By \eqref{eq:ATlambda}, the properties of $\cA_\lambda$ are read from those of 
%$\cT_\lambda$. 
% Note that 
%by Theorem~\ref{thm:ent}
%$\cA_\lambda \subset (-1,1 )$. 
 %and that it is always   symmetric; $\cA_\lambda = 
%\cA_\lambda^+\cup\cA_\lambda^-$, with respect to the origin. 
\begin{figure}
\begin{center}
\includegraphics[width=0.30\textwidth]{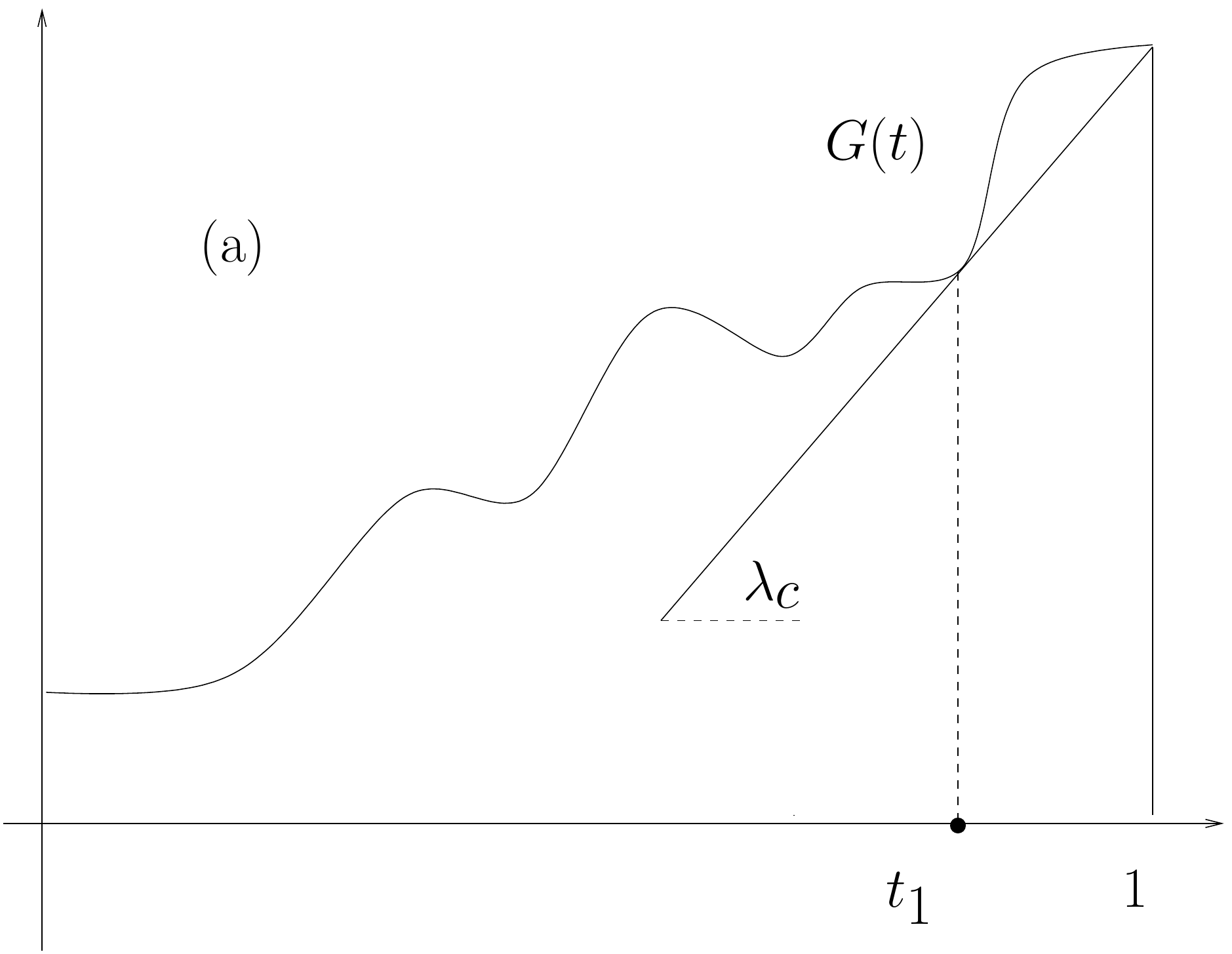}
\includegraphics[width=0.30\textwidth]{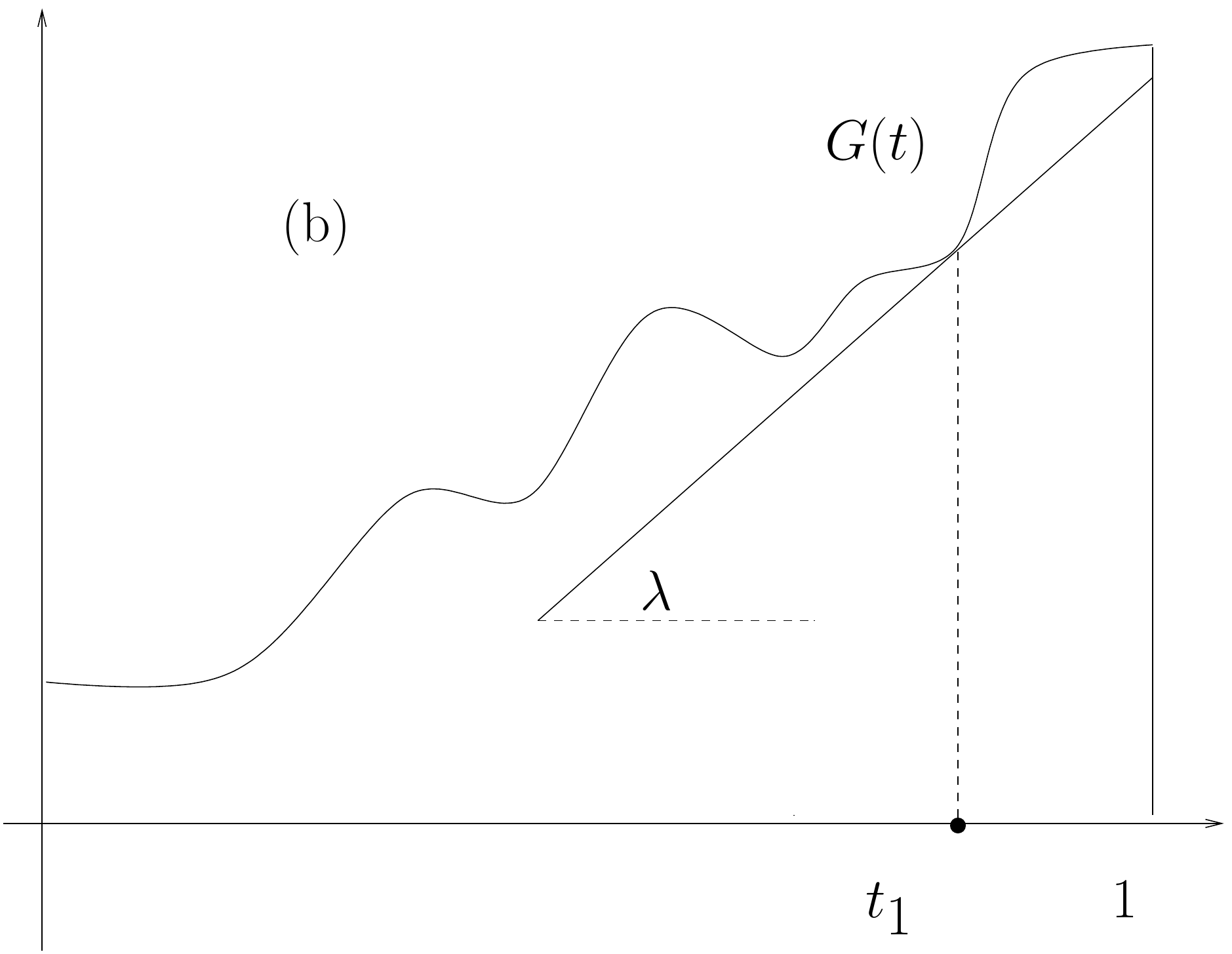}
\includegraphics[width=0.30\textwidth]{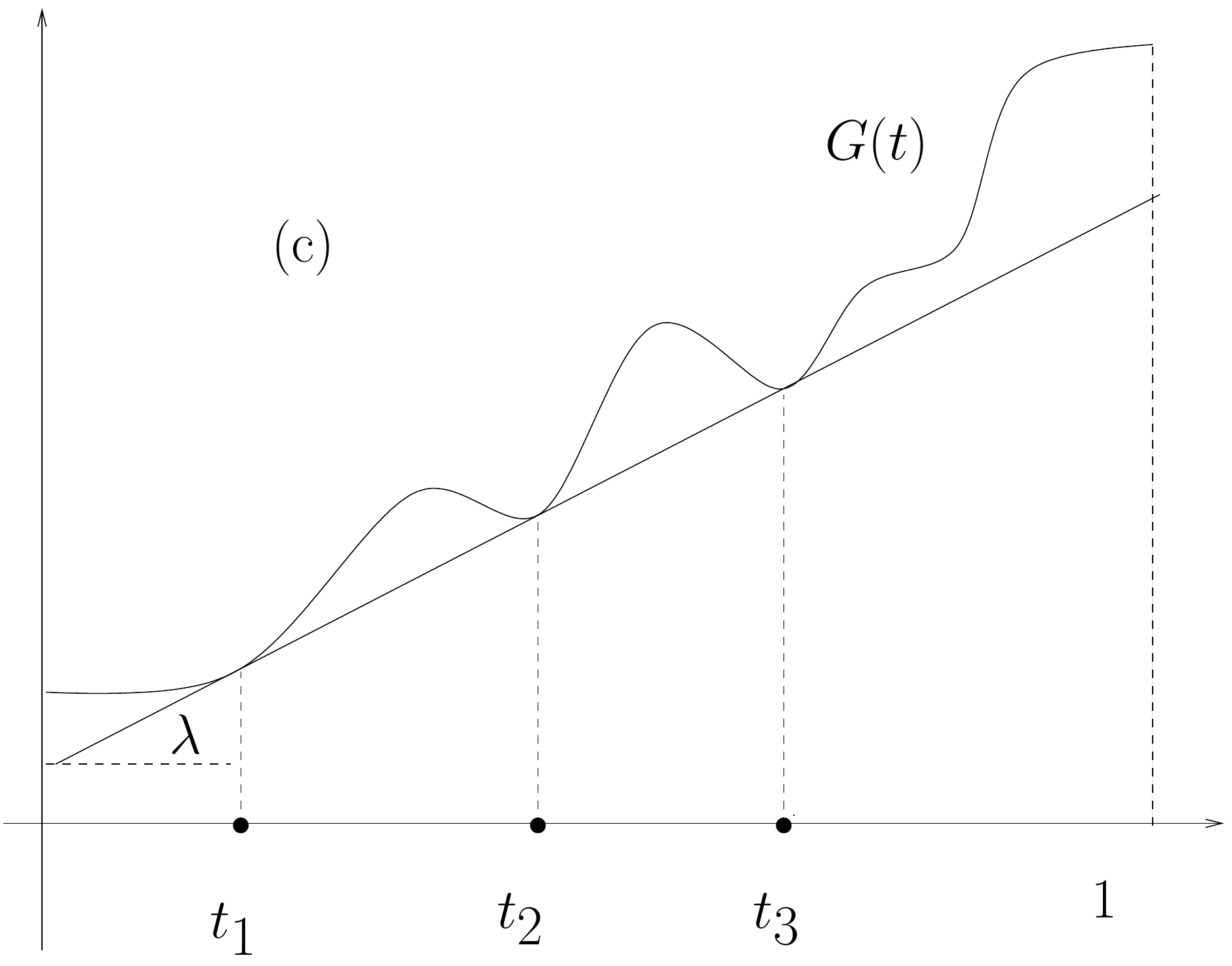}
\end{center}
\caption{ (a) The critical $\lambda_c$ and $\cT_{\lambda_c}^+ = \lbr t_1 , 1\rbr$. 
 (b) $\cT_\lambda^+$ is a singleton. \newline 
(c) $\cT_\lambda^+ =\lbr t_1, t_2 ,t_3 \rbr$. There are at least 
two inflection points of $G$  on $(t_1 ,t_3)$}  
\label{fig:G}
\end{figure}
\subsection{Ferromagnetic $p$-body interaction}
\label{sub:p}
In the usual  Curie-Weiss case with pair interactions 
$G(t ) = \frac12\lb t^2 -1\rb$, so that $\sfr_1 = -\frac{(\lambda-1)^2}{2} $ if $\lambda\leq 1$,
and, accordingly, $\sfr_1  = 0$ if $\lambda\geq 1$. 
For $\lambda \geq 1$ the set $\cM_\lambda = \lbr 0 \rbr$. For $\lambda \in (0,1)$, 
$\cM_\lambda = \lbr \pm\sqrt{1-\lambda^2}\rbr$. 
No first order transition occurs. 
\smallskip 

In  the $p>2$-body ferromagnetic interaction case \eqref{eq:p-body} the function 
\[
G(t) = -{\rm sign}(t)^p \lb 1-t^2\rb^{p/2} .
\]
For odd $p$ maximizers of $\lambda|t| - G (t)$ 
always lie in $(0, 1]$. For even $p$ the set $\cM_\lambda$ is symmetric.  Thus in either case
it is enough to consider 
\begin{equation}
 \label{eq:max-pbody}
\sfr_1 = \lambda - \max_{t\in (0,1]} \lbr \lambda t + \lb 1-t^2\rb^{p/2}\rbr .
\end{equation}
The crucial difference between the Curie-Weiss case $p=2$ and $p>2$ is that 
in the 
latter situation, $G^\prime (1) = 0$, and $G$ contains an inflection point $t_p = \sqrt{\frac{1}{p-1}}$
inside $(0,1 )$. 
An easy computation reveals that for $p>2$, 
\begin{equation}
\label{eq:p-critical}
\lambda_c = \frac{p}{p-1}\lb 1 - \frac{1}{(p-1)^2}\rb^{\frac{p}{2} - 1} \quad{\rm and}\quad
\cT_{\lambda_c}^{+} = \lbr\frac{1}{p-1}  , 1\rbr .
\end{equation}
Accordingly,  for even $p$, 
\begin{equation}
 \label{eq:Aset-p}
\cM_{\lambda_c} = \lbr 0, \pm \hat m \rbr = \lbr 0, \pm  \sqrt{\frac{p(p-2)}{(p-1)^2}}\rbr ,
\end{equation}
whereas  for odd $p$;  $\cM_{\lambda_c} = \lbr 0 ,\hat m\rbr = 
 \lbr 0,   \sqrt{\frac{p(p-2)}{(p-1)^2}}\rbr$.  This is precisely
formula (14) of \cite{BS12}.  For $\lambda > \lambda_c$ 
 the set $\cM_\lambda = \lbr 0\rbr$.
For $\lambda <\lambda_c$ there exists $m^* = m^* (\lambda ,p )\in \lb 
\sqrt{\frac{p(p-2)}{(p-1)^2}} , 1\rb$ such that 
 the set $\cM_\lambda$ is a singleton $\lbr m^* \rbr$ in the odd case, whereas 
$\cM_\lambda = \lbr \pm m^* \rbr$ in the even case. Thus, for mean-filed models 
with $p$-body interaction, $\lambda_c$ is the only value at which fist order transition
 in the ground state  occurs.  

\subsection{Asymptotic ground states} 
\label{sub:gs}
Let us return to general polynomial interactions $F$. 
Fix  $m\in (-1,1)$ and consider the equation, 
\begin{equation}
 \label{eq:HJtheta}
\cH (m , \theta ) = -\sfr_1 .
\end{equation}
The Hamiltonian $\cH (m ,\cdot )$ is strictly convex and symmetric. Hence, if 
$m\in\cM_\lambda$, then $\theta = 0$ is
the unique solution. If $m\not\in\cM_\lambda$, then necessarily $\cH (m , 0) < -\sfr_1 $. Hence,
there exist $\theta (m) >0 $, such that $\pm \theta  (m)$ are the unique
solutions to \eqref{eq:HJtheta}.  If $F$ is symmetric, then 
 $\theta (-m ) = -\theta (m)$.  In any case, however, the following holds:

Let $\psi$ be an 
% (locally Lipschitz) viscosity  
admissible  solution of $\cH (m , \psi^\prime  ) = -\sfr_1$. 
Since $\psi$ is locally 
Lipschitz,  it is a.e. differentiable. Consequently, $\psi^\prime  (m) =\pm \theta  (m)$ a.e. on $(-1,1)$.  
The proposition below relies only on the fact that $\psi$ is a viscosity solution on $(-1,1 )$. 
 %${\rm argmin}\lbr \psi +\frac12 {\rm ent}\rbr \subseteq \cA_\lambda $. 
\begin{proposition}
 \label{prop:ViscosityR} Record $\cM_\lambda = \lbr -1 <   m_1, \dots ,m_k <1\rbr$ in the 
increasing order.  Set $m_0 = -1$ and $m_{k+1} = 1$.  Then on each of the intervals  
$[m_\ell , m_{\ell +1}]$ 
the gradient $\psi^\prime$ is of the following form: There exists $m^*_\ell \in [m_\ell , m_{\ell +1}] $, 
such that:
\begin{equation}
\label{eq:mstarl} 
\psi^\prime = \theta\ {\rm on}\  [m_\ell , m_{\ell}^*)\quad {\rm and} \quad 
\psi^\prime = - \theta\ {\rm on}\  (m_\ell^* , m_{\ell +1}] .
\end{equation}
\end{proposition}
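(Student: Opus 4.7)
The plan is to combine strict convexity of $\cH(m,\cdot)$ with the viscosity super-solution property from Theorem~\ref{thm:constraint}. Since $\cH(m,\theta)=\lambda\sqrt{1-m^2}\cosh(2\theta)-\lambda+F(m)$ is strictly convex and even in $\theta$ with minimum value $\cH(m,0)=-V(m)$, and since $V(m)>\sfr_1$ strictly on the open interval $(m_\ell,m_{\ell+1})$ by definition of $\cM_\lambda$, the equation $\cH(m,\theta)=-\sfr_1$ has exactly two solutions $\pm\theta(m)$ with $\theta(m)>0$ and continuous for $m\in(m_\ell,m_{\ell+1})$. By Theorem~\ref{thm:LDUniform} the function $\psi$ is locally Lipschitz on $(-1,1)$, so it is differentiable a.e., and at each differentiability point the viscosity equation forces $\psi'(m)\in\{+\theta(m),-\theta(m)\}$.

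The decisive step is to show that $\psi$ admits no local minimum in the open interval $(m_\ell,m_{\ell+1})$. Assume for contradiction that $m_0\in(m_\ell,m_{\ell+1})$ is such a local minimum. Then $\psi(m')-\psi(m_0)\geq 0$ for all $m'$ in a neighborhood of $m_0$, so taking $\xi=0$ in \eqref{eq:subdif} one obtains $0\in D_-\psi(m_0)$. The viscosity super-solution condition from Theorem~\ref{thm:constraint} then requires $\cH(m_0,0)=-V(m_0)\geq -\sfr_1$, i.e.\ $V(m_0)\leq\sfr_1$. But $m_0\notin\cM_\lambda$ gives $V(m_0)>\sfr_1$, a contradiction.

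With interior local minima ruled out, $\psi$ must be unimodal on $[m_\ell,m_{\ell+1}]$: letting $m_\ell^*$ denote any point at which $\psi$ attains its maximum on this interval, I claim $\psi$ is non-decreasing on $[m_\ell,m_\ell^*]$ and non-increasing on $[m_\ell^*,m_{\ell+1}]$. Indeed, should monotonicity fail on one side---say there exist $a<b\leq m_\ell^*$ with $\psi(a)>\psi(b)$---then the minimum of $\psi$ over $[a,m_\ell^*]$ cannot be attained at either endpoint (by $\psi(a)>\psi(b)$ and by maximality of $\psi(m_\ell^*)$), so it is attained at an interior point $c\in(a,m_\ell^*)\subset(m_\ell,m_{\ell+1})$, producing a forbidden interior local minimum. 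Since $\psi'(m)\in\{\pm\theta(m)\}$ almost everywhere and $\theta(m)>0$, monotonicity determines the sign: $\psi'=+\theta$ a.e.\ on $(m_\ell,m_\ell^*)$ and $\psi'=-\theta$ a.e.\ on $(m_\ell^*,m_{\ell+1})$. Absolute continuity of $\psi$ together with continuity of $\theta$ on $(m_\ell,m_{\ell+1})$ then upgrades the a.e.\ identity to the pointwise statement \eqref{eq:mstarl}. The only delicate point is the super-solution argument of the second paragraph; the remainder is a routine consequence of the convexity and symmetry of $\cH(m,\cdot)$.
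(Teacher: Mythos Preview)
Your proof is correct and takes a genuinely different, more elementary route than the paper. The paper argues directly with the integral identity: assuming $\psi'(m)=\theta(m)$ at some point $m\in(m_\ell,m_{\ell+1})$ but $\psi(n)>\psi(m)-\int_n^m\theta(t)\,dt$ for some $n<m$, it builds an auxiliary function $v$ and locates a point $n^*$ at which a specific interior value $(2p-1)\theta(n^*)\in(-\theta(n^*),\theta(n^*))$ belongs to the lower subdifferential $D_-\psi(n^*)$, contradicting the super-solution inequality. Your argument replaces this construction by the single clean observation that an interior local minimum forces $0\in D_-\psi(m_0)$, whence $\cH(m_0,0)=-V(m_0)\ge -\sfr_1$, impossible off $\cM_\lambda$. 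Unimodality then follows by the elementary compactness/continuity argument you give, and the sign of $\psi'$ is pinned down by $\psi'\in\{\pm\theta\}$ a.e.\ with $\theta>0$.

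What each approach buys: the paper's argument is more local---it shows directly that once $\psi'=\theta$ at a single point, it must equal $\theta$ on the whole left segment, without ever invoking a global maximum on the interval. Your approach trades this for a global unimodality step, but the payoff is that it uses only the most primitive instance of the super-solution condition (at $\xi=0$) and avoids the auxiliary function $v$ and the subdifferential-tangent construction entirely. Your route is shorter and conceptually cleaner; the paper's is slightly more constructive about how the sign propagates.
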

\noindent
{\em Proof.} 
%Due to symmetry it is enough to prove the statement only for $m\geq 0$. Then the first interval will be started at zero. And we shall treat the interval including zero at the end. 
It would be enough to prove the following: If $m\in (m_{\ell}, m_{\ell+1})$ and $\psi^\prime (m) = 
\theta  (m)$, then for any $n\in ( m_\ell ,m)$,
\begin{equation}
 \label{eq:interval}
\psi (n) = \psi (m) - \int_n^m\theta  (t )\dd t .
\end{equation}
Recall that since $\psi$ is  a viscosity solution on $(-1,1)$, then 
% In the neighbourhood of $m$,
\begin{equation}
\label{eq:supercond} 
\liminf_{\varepsilon\to 0} \frac{\psi(n^*+\varepsilon)-\psi(n^*)}{ |\varepsilon|}
 \geq\theta  \Rightarrow \theta\notin(- \theta (n^*),\theta (n^*)).
\end{equation}
for any $n^*\in (-1,1 )$. We shall show that if \eqref{eq:interval} is violated for 
some $n\in (m_\ell , m )$, then \eqref{eq:supercond} is violated as well in the sense that
there exists $n^* \in (n,m )$ and $\theta\in ( -\theta(n^*),\theta (n^*))$ such that the right hand side of 
\eqref{eq:supercond} holds. 

Indeed, since $\psi^\prime (t) = \pm\theta (t)$ a.e. on $(-1,1)$ it always holds that
\[
 \psi (n) \geq \psi (m) - \int_n^m\theta  (t )\dd t
\]
Let us assume strict inequality. For $k\in [n,m]$ define
\[
 v (k ) = \int_n^k \frac{\psi^\prime (t) +\theta (t )}{2\theta (t )}\dd t .
\]
By construction $v(n)= 0$, $v (m ) \df p (m-n ) <  (m-n )$ and $v^\prime (m )=1$. 
There is no loss of generality to assume that $p>0$. 
Hence there exists
$n^*\in (n, m)$ such that $p\in\partial v (n^* )$ (see Figure~\ref{fig:prop41}). By continuity
of $\theta (m )$ this would mean that
\[
 \liminf_{\varepsilon\to 0} \frac{\psi(n^*+\varepsilon)-\psi(n^*)}{ |\varepsilon|}
 \geq p\theta  (n^* ) - (1-p )\theta  (n^*) = (2p -1)\theta (n^* ) . 
\]
Since for any inner point $n^*\in (m_\ell , m_{\ell +1 })$; $ \theta (n^* ) > 0$,
we arrived to a contradiction. \qed
\begin{figure}
\begin{center}
\includegraphics[width=0.8\textwidth]{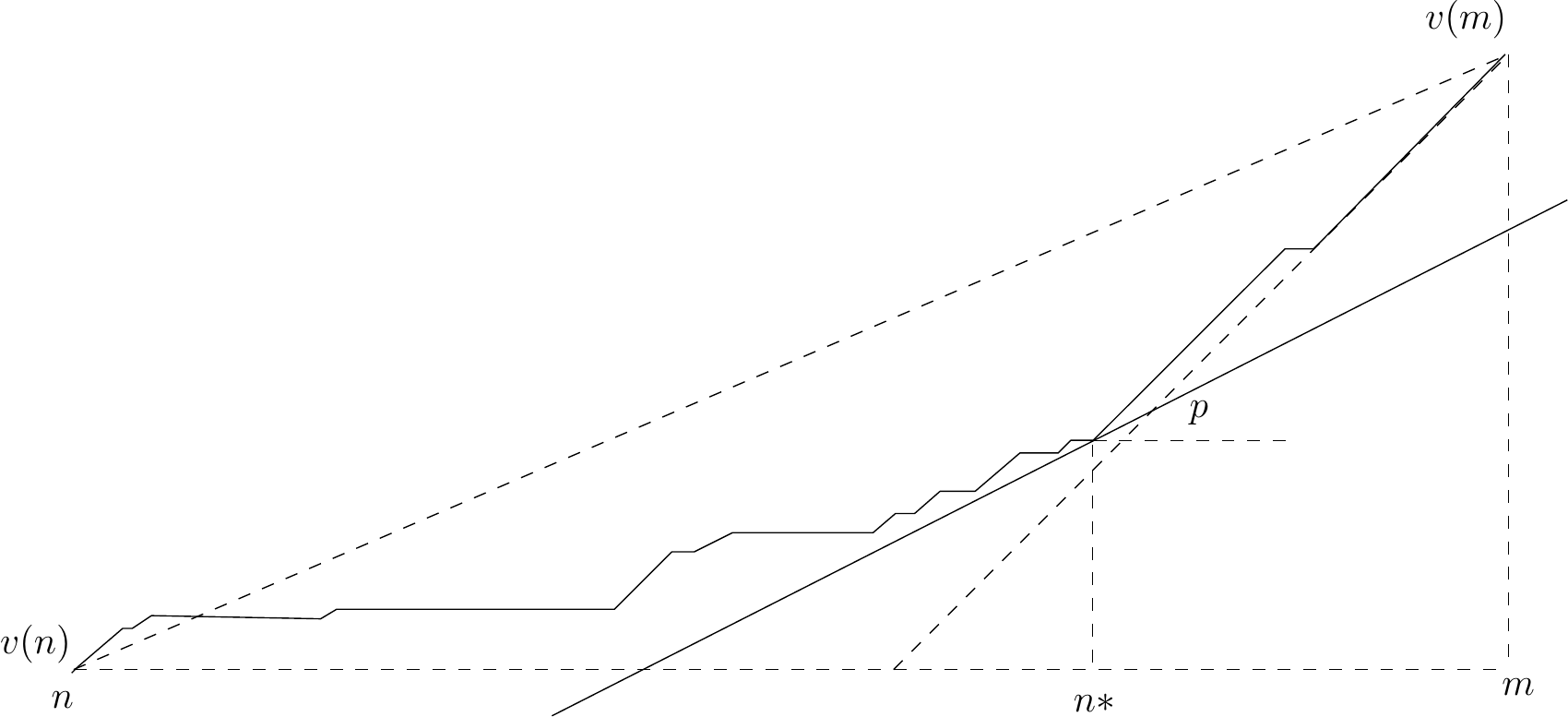}
\end{center}
\caption{ $p\in\partial v (n^* )$.}  
\label{fig:prop41}
\end{figure}
\smallskip  

\begin{remark}
Note that Proposition~\ref{prop:ViscosityR} implies that ground states $\psi$ with more than one 
local minimum necessarily develop shocks.
\end{remark}
If $m_\ell < m_\ell^* <m_{\ell +1}$ from Proposition~\ref{prop:ViscosityR}, then $m_\ell^*$ is a local maximum of $\psi$, 
and $m_\ell^*$ is a   shock  location for the stationary Hamilton-Jacobi equation 
$\cH (m ,\psi )=- \sfr_1$. 

If $\psi$ is, in addition,  a weak KAM solution (in particular, if $\psi$ is admissible), then 
 ${\rm argmin}\lbr \psi \rbr \subseteq \cM_\lambda $. Consequently, by Theorem~\ref{thm:ent},  in the latter
case, $\psi^\prime  = -\theta$ on $(-1, m_1 )$ and $\psi^\prime  = \theta$ on $(m_k ,1)$. 

 Admissible solutions are always normalized in the sense that $\min\psi = 0$ and the 
minimum is attained on $\cM_\lambda\subset (-1,1 )$. It follows that
admissible are uniquely defined in the following two cases:

\case{1}.  The set $\cM_\lambda = \lbr m^*\rbr$ is a singleton. Then,  $\psi^\prime = -\theta$  on 
$ (-1, m^* )$ and $\psi^\prime = \theta $ on $(m^* , 1)$. Consequently,
\be 
\label{eq:psi12}
\psi (m) = \left| \int_{m^*}^{m } \theta (t)\dd t\right| .
\ee

\case{2}. The interaction $F$ is symmetric and $\cM_\lambda = \lbr \pm m^*\rbr$. Then, $\psi$
is also symmetric; $\psi^\prime = \theta$ on $(-m^* ,0)\cup (m^* ,1)$ and $\psi^\prime = -\theta$
on $(-1,0)\cup (0,m^* )$.  That is $\psi (m ) = \psi (-m )$ and $\psi$ is still given by 
\eqref{eq:psi12} for $m\geq 0$. Note that in this case $\psi^\prime$ has a jump at $m =0$. 
\medskip 

Let $\Lambda^c$ be the set of $\lambda$ which do not fall into one of the two cases above. 
As we have seen in Subsection~\ref{sub:p},  $\Lambda_c = \emptyset$ in the case of Curie-Weiss
model, and $\Lambda_c = \lbr\lambda_c\rbr$ (see \eqref{eq:p-critical}) for general $p$-body interaction. 

\subsection{Multiple Wells}
\label{sub:wells}
We shall refer to $\lambda\in\Lambda_c$ as to the case of multiple wells. 
Note first of all that there is a continuum of normalized 
solution of \eqref{eq:C2} as soon as the cardinality $|\cM_\lambda |\geq 2$.  
Indeed, it is easy to see that any normalized $\psi$ which complies with the conclusion of 
Proposition~\ref{prop:ViscosityR}  will be a solution to \eqref{eq:C2}. 
% In fact, the  normalization condition $\min\psi =0$ and any choice of 
%$\lbr m_\ell^*\rbr $-s as above gives rise to a solution $\psi$ of \eqref{eq:C2}.  
%Alternatively any (compatible) assignment of the values $\lbr \psi (m_\ell )\rbr$ determines the
%location of shocks $\lbr m_\ell^*\rbr$, and gives rise to a weak KAM solution of
%\eqref{eq:C2}. 

 One needs, therefore,  an additional  criterion to determine locations of shocks $
\lbr m_\ell^*\rbr $ or, equivalently, to determine   values $\lbr \psi (m_\ell )\rbr$ 
 for    admissible solutions. It would be tempting to derive location of shocks by some 
natural limiting procedure via stabilization of shock propagation along 
 Rankine-Hugoniot curves. Since however, we arrived to 
\eqref{eq:C2} directly from the eigenvalue equation without recourse to a finite horizon problem, it
was not clear to us which limit to consider. 
% In fact, a very similar problem arises in a study of 
%semi-classical limits for Schr\"{o}dinger equations, and we shall closely follow the spectral 
%asymptotics approach of \cite{H88,DS,H02}. 
Our  selection of admissible solutions to  \eqref{eq:C2} is 
 based on a refined asymptotic analysis of Dirichlet eigenvalues in a vicinity of 
points belonging to the set $\cM_\lambda$ .  Namely, a  point $m_\ell \in \cM_\lambda$ can be local 
minima of an admissible solution 
$\psi$ only if there is an exponential splitting of the corresponding bottom eigenvalues.  Precise result is 
formulated in Proposition~\ref{prop:minpsi} below. 

The results of \cite{KR1, KR2, KR2}. enable to explore asymptotic expansions of such eigenvalues
with any degree of precision.  In the simplest case we deduce the following: 
\begin{corollary}
 \label{cor:curv}
Assume that
\be 
\label{eq:curv}
\min_{m\in \cM_\lambda} \chi_0 (m) \df 
\min_{m\in \cM_\lambda} 
\lbr \frac{\lambda}{1-m^2} - \sqrt{1-m^2} F^{\prime\prime}  (m) \rbr 
\ee
is attained at either a  unique point $m^*$ (non-symmetric potentials) or at a unique
 couple $\pm m^*$ (symmetric potentials). Then there is a unique admissible solution 
$\psi$, which is still given by \eqref{eq:psi12}. 
\end{corollary}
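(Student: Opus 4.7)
The plan is to invoke the selection criterion of Proposition~\ref{prop:minpsi} (which reduces admissibility of a weak-KAM solution $\psi$ to a comparison of bottom Dirichlet eigenvalues of $-\cS_N$ in small neighbourhoods of the candidate wells $m_\ell\in\cM_\lambda$), identify the leading subprincipal term in the asymptotic expansion of these eigenvalues with the quantity $\chi_0(m_\ell)$, and then read off the unique admissible $\psi$ using the structure theorem Proposition~\ref{prop:ViscosityR} together with Theorem~\ref{thm:ent}.

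The first step is to fix a small $\delta>0$ and, for each $m_\ell\in\cM_\lambda$, consider the Dirichlet restriction of the lumped operator $\cS_N=\cG_N^g-N(V-\Xi_N)$ to $B_\delta(m_\ell)\cap\Delta_d^N$. By Theorem~\ref{thm:A}, the bottom Dirichlet eigenvalue $R_N^{1,\ell}$ satisfies $R_N^{1,\ell}/N\to\sfr_1=V(m_\ell)$. To get the next order, I would invoke the harmonic approximation near the critical point $(m_\ell,0)$ of the full symbol $\cH$: since $V'(m_\ell)=0$ and $\cH(m,0)=-V(m)$, a Taylor expansion gives
\[
\cH(m,\theta)+\sfr_1\,\approx\,\tfrac12 H_{\theta\theta}(m_\ell)\,\theta^2+\tfrac12 H_{mm}(m_\ell)\,(m-m_\ell)^2,
\]
with $H_{\theta\theta}(m_\ell)=4\lambda\sqrt{1-m_\ell^{\,2}}$ and $H_{mm}(m_\ell)=-V''(m_\ell)$. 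A direct check identifies $\chi_0(m_\ell)=\sqrt{1-m_\ell^{\,2}}\,V''(m_\ell)$, hence the harmonic frequency at the $\ell$-th well equals $\omega(m_\ell)=\sqrt{H_{\theta\theta}H_{mm}}=2\sqrt{\lambda\,\chi_0(m_\ell)}$. The analysis of \cite{KR1,KR2,KR3} applied to $\cS_N$, which is a semiclassical lattice operator on $\frac1N\bbZ^d$ of precisely the type covered there, then yields an expansion
\[
R_N^{1,\ell}\,=\,N\sfr_1+\tfrac12\omega(m_\ell)+O(1/N),
\]
so that the ordering of local bottom eigenvalues coincides, for $N$ large, with the ordering of $\chi_0$ on $\cM_\lambda$.

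The second step is to combine this expansion with Proposition~\ref{prop:minpsi}: a well $m_\ell$ can support a local minimum of an admissible $\psi$ only if it achieves the minimal value of $\chi_0$ on $\cM_\lambda$ (otherwise the splitting between the minimal local Dirichlet eigenvalue and $R_N^{1,\ell}$ is of order one, not exponentially small, which is incompatible with $\psi(m_\ell)=0$). Under the hypothesis of the corollary, the minimizer of $\chi_0$ is either a single point $m^*$ (non-symmetric $F$) or exactly the symmetric pair $\pm m^*$ (symmetric $F$). Therefore any admissible $\psi$ attains its minimum either at $\{m^*\}$ or at $\{\pm m^*\}$.

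Finally I would feed this information into Proposition~\ref{prop:ViscosityR}: on each interval between consecutive points of $\cM_\lambda$ the a.e.\ derivative $\psi'$ is either $\theta$ or $-\theta$ with a single switch location, and the switches (shocks) must lie between two local minima of $\psi$. With only one minimum $m^*$ (or only the symmetric pair $\pm m^*$), the sign pattern of $\psi'$ is forced globally: $\psi'=-\theta$ left of $m^*$ and $\psi'=\theta$ right of $m^*$ (in the symmetric case, symmetry together with the jump at $0$ fills in the remaining intervals), which together with the normalization $\min\psi=0$ gives exactly \eqref{eq:psi12}. The main obstacle in executing this plan is the justification of the one-well semiclassical expansion $R_N^{1,\ell}=N\sfr_1+\tfrac12\omega(m_\ell)+O(1/N)$ with an explicit coefficient and with the Dirichlet boundary condition on $\partial B_\delta(m_\ell)$; this is precisely the content imported from \cite{KR1,KR2,KR3}, and matching their hypotheses to the present lumped Markov chain $\cG_N^g$ on $\Delta_d^N$, particularly at the boundary $\partial\Delta_d$, is the delicate part.
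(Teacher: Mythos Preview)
Your approach is essentially the paper's own: quote the one-well harmonic expansion of the Dirichlet eigenvalues $R_{N,\ell}^1$ from \cite{KR3}, observe that the subleading term is strictly increasing in $\chi_0(m_\ell)$, and invoke Proposition~\ref{prop:minpsi} to rule out all but the $\chi_0$-minimizing wells as local minima of an admissible $\psi$, after which Proposition~\ref{prop:ViscosityR} together with the normalization $\min\psi=0$ force \eqref{eq:psi12}. Two minor slips worth flagging: your harmonic frequency should read $\omega=\sqrt{-H_{\theta\theta}H_{mm}}$ (since $H_{mm}=-V''(m_\ell)<0$ at a well), and your resulting subleading coefficient $\sqrt{\lambda\,\chi_0(m_\ell)}$ differs from the paper's stated value in \eqref{eq:eqRN-zero}, but as both expressions are strictly monotone in $\chi_0$ the selection argument is unaffected.
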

\noindent
For instance,  in the critical  ($\lambda =\lambda_c$)  case of $p >2$ body interaction, 
a substitution of \eqref{eq:p-critical} and \eqref{eq:Aset-p} 
yields:
\be 
\label{eq:pspin-zero}
\chi_0 (0 ) = \lambda_c\quad{\rm and}\quad \chi_0 (\hat m ) = (p-2)(p-1 )\lambda_c .
\ee
In other words, $\chi_0 (0) <\chi_0 (\hat m )$, for any $p>2$ and $\lambda = \lambda_c (p)$  
Consequently,  even at $\lambda = \lambda_c$ 
there is still a unique admissible solution 
$\psi (m ) = \abs{\int_0^m \theta (t )\dd t}$ with the unique minimum at $m^*=0$. 

\noindent
%The rest of the Section is devoted to the proof of Theorem~\ref{thm:C}. 
We explain Corollary~\ref{cor:curv} in the concluding paragraph of this Section. 
\medskip

\noindent
\paragraph{\bf Spectral Asymptotics and the Set $\cM_\lambda$} 
Assume that $\lambda\in\Lambda_c$ and, as before, denote $\cM = \lbr m_1, \dots m_k\rbr$. 
\begin{lemma}
 \label{lem:expdecay}
For any $\delta >0$ there exists $\epsilon >0$ such that
\be  
\label{eq:decaypsi}
\min_{\dd (m , \cM_\lambda )\geq \delta} \psi (m) \geq \epsilon ,
\ee
uniformly in normalized admissible solutions of \eqref{eq:C2}.
\end{lemma}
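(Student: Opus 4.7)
The strategy is to exploit the piecewise-monotone structure of admissible solutions supplied by Proposition~\ref{prop:ViscosityR}, together with positivity of the auxiliary function $\theta$ off $\cM_\lambda$, to produce an integral lower bound that is manifestly uniform in $\psi$.

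Recall that $\theta(m)\geq 0$ is the unique non-negative solution of $\cH(m,\theta)=-\sfr_1$. Since $\min_\theta \cH(m,\theta)=-V(m)$, one has $\theta(m)=0$ iff $V(m)=\sfr_1$ iff $m\in\cM_\lambda$; moreover $\theta$ is continuous on $(-1,1)$, and because Theorem~\ref{thm:ent} forces $\cM_\lambda\subset\mathrm{int}(\Delta_d)$ (hence $V(\pm1)>\sfr_1$) one has $\theta(m)\to+\infty$ as $m\to\pm1$. Consequently
\[
\eta_\delta \;:=\; \inf\bigl\{\theta(m)\sth m\in[-1,1],\; \dd(m,\cM_\lambda)\geq\delta\bigr\} \;>\; 0
\]
for every $\delta>0$. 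Crucially, $\theta$, $\cM_\lambda$ and hence $\eta_\delta$ are intrinsic to the model and do not depend on the particular admissible $\psi$.

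Write $\cM_\lambda=\{m_1<\cdots<m_k\}$. By Proposition~\ref{prop:ViscosityR}, every admissible $\psi$ is piecewise monotone with $|\psi'|=\theta$: on each $[m_\ell,m_{\ell+1}]$ there is a shock $m_\ell^\ast$ with $\psi'=\theta$ on $(m_\ell,m_\ell^\ast)$ and $\psi'=-\theta$ on $(m_\ell^\ast,m_{\ell+1})$, while $\psi'=-\theta$ on $(-1,m_1)$ and $\psi'=\theta$ on $(m_k,1)$. Since $\psi$ is locally Lipschitz, the fundamental theorem of calculus and the normalization $\psi(m_j)\geq 0$ for every $m_j\in\cM_\lambda$ yield the pointwise lower bound
\[
\psi(m) \;\geq\; \Bigl|\int_{m_j(m)}^{m}\theta(s)\,\dd s\Bigr|,
\]
where $m_j(m)\in\cM_\lambda$ is the $\cM_\lambda$-endpoint of the monotonicity piece containing $m$ that faces a local minimum (so $\psi$ is non-increasing from $m$ toward $m_j(m)$). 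Now fix $m$ with $\dd(m,\cM_\lambda)\geq\delta$ and shrink $\delta$ to be less than half the minimal gap in $\cM_\lambda\cup\{\pm1\}$ (the bound for larger $\delta$ is \emph{a fortiori} valid, since the set over which $\psi$ is being bounded only shrinks). Then every $s$ in the subinterval of length $\delta/2$ sitting at distance between $\delta/2$ and $\delta$ from $m_j(m)$ has $\dd(s,\cM_\lambda)\geq\delta/2$, so $\theta(s)\geq\eta_{\delta/2}$ there, and
\[
\psi(m) \;\geq\; \frac{\delta}{2}\,\eta_{\delta/2} \;=:\; \epsilon(\delta)\;>\;0.
\]

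The only real obstacle is bookkeeping: one must cover the boundary intervals $(-1,m_1)$ and $(m_k,1)$ together with the two halves $(m_\ell,m_\ell^\ast)$ and $(m_\ell^\ast,m_{\ell+1})$ of each inner interval separately. The mechanism, however, is identical in each case, and the $\psi$-dependent data (the shock positions $\{m_\ell^\ast\}$ and the non-negative level values $\{\psi(m_\ell)\}$) never enter the lower bound—the first because $\theta$ is determined by the model alone, the second because $\psi(m_\ell)\geq 0$ by normalization. This gives uniformity of $\epsilon(\delta)$ across normalized admissible solutions.
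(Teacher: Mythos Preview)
Your argument is correct and follows essentially the same route as the paper's own proof: both invoke Proposition~\ref{prop:ViscosityR} to obtain, for $m$ in an interval $(m_\ell,m_{\ell+1})$, the lower bound
\[
\psi(m)\;\geq\;\min\Bigl\{\psi(m_\ell)+\int_{m_\ell}^{m}\theta(t)\,\dd t,\ \psi(m_{\ell+1})+\int_{m}^{m_{\ell+1}}\theta(t)\,\dd t\Bigr\},
\]
and then use $\psi(m_j)\geq 0$ together with strict positivity of $\theta$ away from $\cM_\lambda$ to conclude. Your version is simply more explicit---you quantify the last step via $\eta_\delta=\inf\{\theta(m):\dd(m,\cM_\lambda)\geq\delta\}>0$ and produce the concrete constant $\epsilon=\tfrac{\delta}{2}\eta_{\delta/2}$---whereas the paper stops at the displayed inequality and declares that ``\eqref{eq:decaypsi} follows.''
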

 \begin{proof}
Let $m\in ( m_l ,m_{l+1})$. By Proposition~\ref{prop:ViscosityR} 
\[
 \psi (m) \geq\min\lbr \psi (m_l ) +\int_{m_l}^m \theta( t)\dd t\, ,\, 
\psi (m_{l+1} ) +\int_{m }^{m_{l+1}} \theta( t)\dd t \rbr ,
\]
and \eqref{eq:decaypsi} follows.
 \end{proof}
In the sequel $h_N = {\rm e}^{-N\psi_N}$ is the Perron-Frobenius 
eigenfunction of $\cG_N^g + NF_g \df \cS_N $ ; $\cS_N h_N = -R_N^1 h_N$. 
Recall:
\be  
\label{eq:operator12}
\begin{split}
\cS_N f (m) &=  N \lb F (m ) - \lambda \rb   f (m) \\
&+ \frac{N\lambda}{2}
\sqrt{(1-m)(1+m +\frac{2}{N})} f\lb m+\frac{2}{N}\rb \\
&
+
 \frac{N\lambda}{2}
\sqrt{(1+m)(1-m +\frac{2}{N})} f\lb m-\frac{2}{N}\rb 
\end{split}
\ee 
Pick $0 < \delta <\frac{1}{4}\min_{l}\abs {m_{l+1} -m_l}$. Let $1\equiv \sum_0^k\chi_l$ be a partition of unity
satisfying: For $l=1, \dots , k$
\[
 \chi_l \equiv 1\ {\rm on}\ I_\delta (m_l ) \quad{\rm and}\quad 
\chi_l \equiv 0\ {\rm on}\ I_{2\delta}^c (m_l ) .
\]
Above $I_\eta (m )$ is the interval  $ [m-\eta , m+\eta]$.  By Lemma~\ref{lem:expdecay} there
exists $\epsilon >0$ such that for $l =1, \dots ,k$ and all $N$ large enough
\be
\label{eq:partition}
 \frac{1}{N}\log \max_{m}\abs{\lb \cS_N +R_N^1\rb \chi_l  h_N (m)}\leq -\epsilon .
\ee
Let $\cS_N^l$ be a Dirichlet restriction of $\cS$ to $I_\delta (m_ l )$. 
Let $- R_{N ,l}^1$  be the leading eigenvalue of $\cS_{N}^l$. 

\noindent
We are entitled to conclude: There exists $\epsilon^\prime >0$ such that
\be 
\label{eq:Dir}
 \frac{1}{N}\log \abs{ R_N^1 - \min_l R_{N,l}^1}\leq -\epsilon^\prime .
\ee
Furthermore, 
\begin{proposition}
 \label{prop:minpsi}
 If 
$l=0, \dots , k$ and 
$\psi = \lim_{j\to\infty} \psi_{N_j}$
 is a subsequential limit such that $m_l$ is a local minimum of $\psi$, then 
there exists $\epsilon^\prime  >0 $ such that: 
\be 
\label{eq:expsplit}
\frac{1}{N_j}  \log\abs{ R_{N_j }^1 -  R_{N_j , l}^1} \leq -\epsilon^\prime .
\ee
\end{proposition}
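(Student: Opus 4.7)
The plan is to use a standard trial function argument for Dirichlet eigenvalues: plug a cutoff $f_{N_j} = \chi_l h_{N_j}$ of the Perron--Frobenius eigenfunction into the Rayleigh quotient for $-R_{N_j,l}^1$ and show that the commutator error introduced by the cutoff is exponentially small in $N_j$, provided $\psi$ has a strict local minimum at $m_l$. The trivial half of \eqref{eq:expsplit}, namely $R_{N,l}^1 \geq R_N^1$, follows at once: since $\cS_N = \cG_N^g + NF_g$ is self-adjoint on $L^2(\Delta_d^N, \text{uniform})$, restricting to a sub-domain can only decrease the top eigenvalue.

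For the reverse direction, fix a smooth cutoff $\chi_l$ with $\chi_l\equiv 1$ on $I_{\delta/2}(m_l)$ and $\chi_l\equiv 0$ off $I_\delta(m_l)$, and set $f_{N_j} = \chi_l h_{N_j}$. Using $\cS_{N_j}h_{N_j} = -R_{N_j}^1 h_{N_j}$ and the fact that the multiplication part $NF_g$ commutes with $\chi_l$, one gets $(\cS_{N_j}+R_{N_j}^1)f_{N_j} = [\cG_{N_j}^g, \chi_l]h_{N_j}$. The nearest-neighbour (tridiagonal) structure displayed in \eqref{eq:operator12} then forces this commutator to be supported in the transition annulus $T_\delta = \lbr m : \delta/2 \leq |m-m_l|\leq \delta\rbr$, with pointwise bound of order $N_j\, h_{N_j}(m\pm 2/N_j)$ coming from the smoothness of $\chi_l$.

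The quantitative core of the argument is a uniform exponential separation between $h_{N_j}$ on $T_\delta$ and its value at $m_l$. Because $m_l\in\cM_\lambda$ is a local minimum of the limit $\psi$, the shock-structure result Proposition~\ref{prop:ViscosityR} applied on both flanking intervals forces $\psi'=-\theta$ on a left neighbourhood of $m_l$ and $\psi'=\theta$ on a right neighbourhood, hence $\psi(m)-\psi(m_l) = \abs{\int_{m_l}^m \theta(t)\,\dd t}$ there; positivity of $\theta$ off the finite set $\cM_\lambda$ then yields an $\epsilon''=\epsilon''(\delta)>0$ with $\psi(m)-\psi(m_l)\geq \epsilon''$ throughout $T_\delta$. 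Combined with the uniform convergence $\psi_{N_j}\to\psi$ from Theorem~\ref{thm:LDUniform}, this gives, for large $j$,
\[
\max_{m\in T_\delta} h_{N_j}(m) \leq e^{-N_j(\psi(m_l)+\epsilon''/2)} \quad\text{and}\quad h_{N_j}(m_l^{N_j}) \geq e^{-N_j(\psi(m_l)+\epsilon''/4)},
\]
where $m_l^{N_j}$ is the closest lattice point in $\Delta_d^{N_j}$. A single-site lower bound $\|f_{N_j}\|^2 \geq (N_j+1)^{-1}h_{N_j}(m_l^{N_j})^2$ then controls the denominator below.

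The final step is to plug into the variational principle
\[
-R_{N_j,l}^1 \geq \frac{\la f_{N_j}, \cS_{N_j}f_{N_j}\rab}{\|f_{N_j}\|^2} = -R_{N_j}^1 + \frac{\la f_{N_j}, [\cG_{N_j}^g, \chi_l]h_{N_j}\rab}{\|f_{N_j}\|^2}
\]
and estimate the error by Cauchy--Schwarz using the two bounds above, producing $R_{N_j,l}^1-R_{N_j}^1 = O\lb e^{-N_j\epsilon''/5}\rb$, which together with the trivial direction yields \eqref{eq:expsplit}. I expect the main obstacle to be precisely the quantitative separation step: without the shock-structure output of Proposition~\ref{prop:ViscosityR}, the raw weak-KAM/local-minimum hypothesis alone would allow $\psi$ to remain constant on one flank of $m_l$, which would destroy the uniform exponential gap needed on $T_\delta$.
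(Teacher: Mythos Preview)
Your proof is correct and is essentially the paper's own direct argument: the paper also takes $\chi_l h_{N_j}$ (normalized by $h_{N_j}(m_l)$) as a quasi-mode, observes that the local-minimum hypothesis---via the strict increase of $\psi$ away from $m_l$ that you extract from Proposition~\ref{prop:ViscosityR}---upgrades the commutator bound \eqref{eq:partition} to the normalized form \eqref{eq:expsplit-l}, and then appeals to the spectral theorem where you use the Rayleigh quotient. The paper additionally remarks that the claim follows from the metastability theory of \cite{BEGK} applied to the ground-state chain of Lemma~\ref{lem:GS}, but this is offered as an alternative route, not an ingredient you are missing.
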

\begin{proof}
In view of Lemma~\ref{lem:GS} the claim readily follows from the general theory of 
exponentially  low 
lying spectra for  metastable Markov chains \cite{BEGK}. For a direct proof note that  
 under the assumptions of the Proposition, one (possibly after further  shrinking the value of $\delta$) 
can upgrade 
\eqref{eq:partition} as 
\be 
\label{eq:expsplit-l}
\frac{1}{N_j }\log \max_{m}\abs{\lb \cS_{N_j} +R_{N_j}^1\rb \frac{ \chi_l  h_{N_j} (m)}{h_{N_j}
 (m_l )}}\leq -\epsilon ,
\ee
and \eqref{eq:expsplit} follows from the spectral theorem. 
\end{proof}
\smallskip 

\noindent
\paragraph{\bf Asymptotics of Dirichlet Eigenvalues $R_{N ,l}^1$}
Define $\lambda (m) = \sqrt{1-m^2}$. The asymptotics of $R_{N,l }^1$ up to zero order terms
is given \cite{KR3} by 
\be
\label{eq:eqRN-zero}
-R_{N ,l}^1 = - N\sfr_1 -\sqrt{\frac{V^{\prime\prime} (m_l )}{\lambda (m_l )}} + {\sf O}\lb\frac{1}{N}\rb
= - N\sfr_1 -\chi_0 (m_l ) + {\sf O}\lb\frac{1}{N}\rb , 
\ee
where we used the explicit expression \eqref{eq:effV} for $V$ in 
the second equality. $\chi_0$ was defined in \eqref{eq:curv}. 
The claim of Corollary~\ref{cor:curv}  follows now from Proposition~\ref{prop:minpsi}. 
\qed
\smallskip

\appendix
\section{The variational problem}
The Lagrangian $\cL_0$ was defined in \eqref{eq:HLnot}
%To facilitate the notation set:
%$$
%\lambda_\alpha^+ (\um ) = \sum_{\beta\neq \alpha }m _\beta \lambda (\beta ,\alpha )
%\quad\text{and}\quad  
%\lambda^-_\alpha (\um ) = \sum_{\beta\neq \alpha }m _\alpha \lambda (\alpha ,\beta ) .
%$$

\vskip 0.1in
\paragraph{\bf Lower bounds on $\cL_0$.}
Fix $\alpha \in \cA$ and consider $\theta^t_\alpha = \frac{n-1}{n}t $ and, for $\beta\neq \alpha$, 
$\theta_\beta^t = -\frac{1}{n}t $. Recall that $\uv\in\bbR^n_0$, that is $v_\alpha = -\sum_{\beta\neq \alpha}
v_\beta$. Therefore, for any $\alpha$
\[
 \cL_0 (\um , \uv ) \geq \sup_{t}\lbr tv_\alpha - \sum_{\beta\neq\alpha}
\sqrt{m_\alpha m_\beta}\, 
\lambda_{\alpha \beta} 
\lb 
{\rm cosh} ( t ) -1 \rb\rbr 
%\lb m_\alpha {\rm e}^{-t} + m_\beta {\rm e}^t \rb \rbr 
%- \sum_{\beta \neq \alpha } \sqrt{m_\alpha m_\beta}\, 
%\lambda_{\alpha \beta} .
\]
Define $\lambda_\alpha (\um ) = \sum_{\beta\neq\alpha}\sqrt{m_\alpha m_\beta}\, 
\lambda_{\alpha \beta} $. For $\abs{v_\alpha }\geq \lambda_\alpha  (\um )$ 
one may choose 
$t^* = {\rm sign}(v_\alpha )\log\frac{\abs{v_\alpha}}{\lambda_\alpha (\um )}$. 
We infer: If $\abs{v_\alpha }\geq \lambda_\alpha  (\um )$, then
\begin{equation}
 \label{eq:lbL0}
\cL_0 (\um , \uv ) \geq \abs{v_\alpha}\lb \log\frac{\abs{v_\alpha}}{\lambda_\alpha (\um )} -1\rb . 
%- \sum_{\beta ,\gamma} \lambda (\beta ,\gamma ). 
\end{equation}

\vskip 0.1in
\paragraph{\bf Upper bounds on the Lagrangian $\cL_0$.}
Consider
\[
 \cR_0 (\um , \uv ) \df \sup_{\utheta}\lbr \sum v_\alpha \theta_\alpha - 
\sum_{\alpha ,\beta} \sqrt{m_\alpha m_\beta}\, \lambda_{\alpha ,\beta }\,
 {\rm cosh} (\theta_\beta -\theta_\alpha )\rbr
%= \cL_0 (\um , \uv )  - \sum_{\alpha\beta }\sqrt{m_\alpha m-|beta} \lambda_{\alpha ,\beta } .
.\]
%The second term above depend only on the state variable $\um$ and it is uniformly bounded and 
%continuous on $\Delta_d$. 
Since  $\cL_0 (\um ,\uv ) = \cR_0 (\um , \uv ) + \sum_\alpha \lambda_\alpha (\um )$, 
it would be enough to control the dependence of $\cR_0$ on $\uv$. 
\smallskip 

\noindent
Let us say that a flow $\uf = \lbr f_{\alpha  \beta }\rbr$ is compatible with $\uv\in\bbR_0^n$; 
$\uf\sim\uv$ if:
\begin{itemize}
 \item[(a)] It is a flow: $f_{\alpha  \beta } = - f_{\beta \alpha }$.
\item[(b)] For any $\alpha\in\cA$, $\sum_\beta f_{\beta\alpha } = v_\alpha$. 
\end{itemize}
 Then $\sum v_\alpha\theta_\alpha = \frac12\sum_{\alpha ,\beta} (\theta_\beta - \theta_\alpha )f_{\alpha\beta}$.
Hence, for any $\uf\sim \uv$, 
\begin{equation}
 \label{eq:flow}
\cR_0 = \sup_{\utheta} \lbr \frac12\sum_{\alpha ,\beta} (\theta_\beta - \theta_\alpha )f_{\alpha\beta}
- \sqrt{m_\alpha m_\beta} \, \lambda_{\alpha \beta } \, 
{\rm cosh} (\theta_\beta -\theta_\alpha )\rbr .
%\lb m_\alpha{\rm e}^{\theta_\beta -\theta_\alpha} + m_\beta {\rm e}^{\theta_\alpha - \theta_\beta}\rb \rbr .
\end{equation}
We shall rely on the following upper  bound on each term in \eqref{eq:flow}: For any $f$ and $a >0$
\[
\sup_t \lbr ft - 
a\, {\rm cosh}(t)\rbr \leq \abs{f}\log\lb 1+\frac{2\abs{f}}{a}\rb .
%\lambda_{\alpha \beta }\lb m_\alpha {\rm e}^t + m_\beta{\rm e}^{-t}\rb\rbr
%\leq   f\log\frac{f}{m_\alpha \lambda_{\alpha\beta} }\1_{f>0}
%+|f|\log\frac{|f|}{m_\beta \lambda_{\beta\alpha} }\1_{f<0}
\]
Consequently, we derive the following upper bound on $\cR_0$:
% (below $f^+\df f\vee 0$):
\begin{equation}
 \label{eq:UpperFlow}
\cR_{0} (\um ,\uv)\leq \inf_{\uf\sim\uv}\, 
\sum_{\alpha ,\beta} \frac{\abs{f_{\alpha\beta}}}{2}\log\lb 1+  
\frac{\abs{f_{\alpha\beta}} }{\sqrt{m_\alpha m_\beta} \, \lambda_{\alpha\beta}}\rb  .
\end{equation}
\smallskip 

\noindent
\paragraph{\bf Regularity of minimizers.}  Let $\um\in {\rm int}\lb \Delta_d\rb$. We claim that there exists $\delta_0 >0$
and $t_0 >0$ such that for any $\ump$ in the $\delta_0$-neighbourhood of $\um$  the minimizer $\gamma^*$
of 
\[
 \inf_{\gamma (0 )=\ump, \gamma (t_0 )=\um}\int_0^{t_0}\cL (\gamma (s ), \gamma^\prime (s ))\dd s
\]
exists and is, actually, $\sfC^\infty $.  Indeed, an absolutely continuous minimizer exists by the 
classical Tonelli's theorem.  By  lower \eqref{eq:lbL0} and upper 
\eqref{eq:UpperFlow} bounds on the Lagrangian, it is easy to understand that minimizers stay
inside ${\rm int} (\Delta_d )$ once $t_0$ and $\delta_0$ are chosen to be appropriately small. 
But then the regularity theory of either \cite{CV} or \cite{AAB} applies and yields Lipschitz 
regularity on $[0,t_0 ]$. Since, the Lagrangian $\cL$ is strictly convex in the second argument,
 and, in the interiour of 
$\Delta_d$, it is  $\sfC^\infty$ in both arguments, the $\sfC^\infty$ of the minimizer follows from
the implicit function theorem, see e.g. \cite{BGH}.

\end{document}